\newcommand{\inner}[3][]{\langle#2,#3\rangle_{#1}}
\newcommand{\norm}[2][]{\|{#2}\|_{{#1}}}
\newcommand{\snorm}[2][]{|{#2}|_{{#1}}}
\newcommand{\abs}[1]{|#1|}
\newcommand{\EE}{{\mathbf E}}
\newcommand{\dd}{ { \mathrm{d}} }
\newcommand{\ee}{ { \mathrm{e}} }
\DeclareMathOperator*{\Tr}{\mathrm{Tr}}
\newcommand{\pt}{\partial}
\newcommand{\cT}{{ \mathcal T}}
\newcommand{\cL}{{ \mathcal L}}
\newcommand{\cF}{{ \mathcal F}}
\newcommand{\cD}{{ \mathcal D}}
\newcommand{\IR}{{ \mathbf R}}
\newcommand{\IE}{{ \mathbf E}}
\newcommand{\HS}{\mathrm{HS}}
\newcommand{\ts}{k}
\newcommand{\Rk}{R_{\ts,h}}
\newcommand{\Wj}{\Delta W^j}
\newcommand{\Xbe}{X_{h}}
\newcommand{\Xbej}{\Xbe^j}
\newcommand{\Xben}{\Xbe^n}
\newcommand{\betac}{c_1}
\newcommand{\dH}{\dot{H}}
\newcommand{\HSAQ}{\|A^{1/2}Q^{1/2}\|_{\HS}}
\newcommand{\stig}[1]{\textcolor{blue}{#1}}
\newtheorem{thm}{Theorem} [section]
\newtheorem{lem} [thm] {Lemma}
\newtheorem{prop} [thm] {Proposition}
\newtheorem{rem} [thm] {Remark}
\newtheorem*{assumption*}{Assumption}
\numberwithin{equation}{section}  
\title[Finite element approximation of the Cahn--Hilliard--Cook
equation]{Strong convergence of a fully discrete finite element
  approximation of the stochastic Cahn--Hilliard
  equation} 
\author[D.~Furihata]{Daisuke Furihata}
\address{Cybermedia Center, Osaka University, 1-32 Machikaneyama,
  Toyonaka, Osaka 560-0043, Japan
  }
\email{furihata@cmc.osaka-u.ac.jp}
\author[M.~Kov\'acs]{Mih\'aly Kov\'acs}\thanks{M. Kov\'acs was supported by Marsden Fund of the Royal Society of New Zealand project number UOO1418. }
\address{Department of Mathematical Sciences, Chalmers University of Technology and 
  University of Gothenburg, SE--412 96 Gothenburg, Sweden}
 \email{mihaly@chalmers.se}
\author[S. Larsson]{Stig Larsson}
\address{Department of Mathematical Sciences, Chalmers University of Technology and 
  University of Gothenburg, SE--412 96 Gothenburg, Sweden}
 \email{stig@chalmers.se}
\author[F.~Lindgren]{Fredrik Lindgren}\thanks{F. Lindgren was supported by JSPS KAKENHI grant number 15K45678 and by The Swedish Foundation for International Cooperation in Research and Higher Education, STINT through the Capstone Award.}
\address{Cybermedia Center, Osaka University, 1-32 Machikaneyama,
  Toyonaka, Osaka 560-0043, Japan
  }
\email{fredrik.lindgren1979@gmail.com}
\keywords{stochastic partial differential equation;
  Cahn--Hilliard--Cook equation; additive noise; Wiener process; Finite
  element method, Euler method; time discretization; strong
  convergence} 
\subjclass[2000]{60H15, 60H35, 65C30}
\begin{document}
\maketitle

\begin{abstract}
  We consider the stochastic Cahn--Hilliard equation driven by
  additive Gaussian noise in a convex domain with polygonal boundary
  in dimension $d\le 3$. We discretize the equation using a standard
  finite element method in space and a fully implicit backward Euler
  method in time. By proving optimal error estimates on subsets of the
  probability space with arbitrarily large probability and
  uniform-in-time moment bounds we show that the numerical solution
  converges strongly to the solution as the discretization parameters
  tend to zero. 
\end{abstract}

\section{Introduction}

Let $\mathcal{D}\subset \mathbb{R}^d$, $d\le 3$, be a convex spatial
domain with polygonal boundary $\partial\cD$ and consider the
stochastic Cahn--Hilliard equation, also known as the
Cahn--Hilliard--Cook equation \cite{MR1870315,Cook,MR1359472}, written
in the abstract It\^o form
\begin{equation}\label{eq:sac}
\dd X+A(A X+f(X))\,\dd t=\dd W,~t\in(0,T];\quad X(0)=X_0,
\end{equation}
where $-A$ is the Laplacian with homogeneous Neumann boundary
conditions and where $\{W(t)\}_{t\ge 0}$ is an $H:=L_2(\cD)$-valued $Q$-Wiener
process on a filtered probability space
$(\Omega, \cF, \mathbf{P}, \{\cF_t\}_{t\ge 0})$ with respect to the
normal filtration $ \{\cF_t\}_{t\ge 0}$. In order to avoid additional
technical difficulties we assume that the initial-value $X_0$ is
deterministic.  For the weak (and mild) solution $X$ (see, Theorem \ref{weaksol})  to preserve mass, we assume that
the average $ \abs{\cD}^{-1}\int_\cD W(t) \,\dd x=0$ for all
$t\geq 0$.

The nonlinear function $f$ is assumed to be of the form $f=F'$, where
$F$ has the following structural property:
\begin{equation}\label{eq:F}
\text{$F$ is a polynomial of degree $4$ with leading term $c_0s^4$
  where $c_0>0$.} 
\end{equation}
A typical example is $F(s)=\frac14(s^2-1)^2$, which is a double
well potential. Note that $f$ is only locally Lipschitz and does not
satisfy a linear growth condition. We also note that the restriction
on the polynomial degree of $F$ comes from the fact that we allow
$d=3$. For $d=1,2$ the exponent 4 in \eqref{eq:F} may be replaced by
any even integer larger than or equal 4 and the arguments of the paper
are still valid with trivial changes.  It is easy to see that
\eqref{eq:F} implies the following dissipativity property, with
$\inner{\cdot}{\cdot}$, $\norm{\cdot}$ denoting the scalar product and
norm in $H=L_2(\cD)$,
\begin{equation}\label{eq:diss1}
\langle f(v),v\rangle\geq -C_0, \quad v\in L_4(\cD),
\end{equation}
for some $C_0$.  It also follows that $F''(s)\geq
-\betac^2$ for some $\betac$, which yields
\begin{equation}\label{eq:taylorf}
F(x)-F(y)\leq f(x)(x-y)+\tfrac12 \betac^2(x-y)^2, \quad x,y\in\IR.
\end{equation}
Finally, as $f$ is a polynomial of degree $3$ we have, for some $C>0$,
that
\begin{equation}\label{eq:loclip}
|f(x)-f(y)|\leq C(1+x^2+y^2)|x-y|,\quad x,y\in\IR.
\end{equation}

It is not hard to see that, due to the homogeneous Neumann boundary
conditions and since the average of $W$ equals 0, it
follows that $X$ preserves the average (the total mass), that is,
$\abs{\cD}^{-1}\int_\cD X(t) \,\dd x =\abs{\cD}^{-1}\int_\cD X_0 \,\dd
x$, cf.\ Remark~\ref{rem:preservemass}.
Note that for $s_0\in \mathbb{R}$ the function $\tilde{F}(s)=F(s+s_0)$
also has the structural property \eqref{eq:F}. Therefore, one
can employ a change of variables
$X\to X-\abs{\cD}^{-1}\int_\cD X_0 \,\dd x$, and hence we will assume
that the average $\abs{\cD}^{-1}\int_\cD X_0 \,\dd x=0$.

We fix a finite time horizon $T>0$ and for $N\in \mathbb{N}$
consider the fully implicit finite element method
\begin{equation}\label{eq:BE}
\begin{aligned}
\Xbej-\Xbe^{j-1}+kA_h^2\Xbej+kA_hP_hf(\Xbej)
&=P_h\Wj,\quad j=1,2,\ldots,N,\\
 X^0_h&=P_hX_0.
\end{aligned}
\end{equation}
Here $k=T/N$ is the time-step, $t_j=jk$,
$\Delta W^{j}=W(t_j)-W(t_{j-1})$, $-A_h$ is the discrete
Laplacian, and $P_h$ is the orthogonal projector onto the finite
element space $S_h$ with mesh size $h>0$; for more details on the
finite element method, see Section~\ref{subsec:fem}.  It is easy to
see that also $X_h^j$ preserves the mass, cf.\
  Remark~\ref{rem:preservemass}.  An implementation based on
  the open source finite element software FEniCS can be found in
  \texttt{http://www.math.chalmers.se/\%7estig/code/chc.py}.
 
The main result of the paper, Theorem~\ref{thm:main}, asserts that if
the operator composition $A^{\frac{1}{2}}Q^{\frac{1}{2}}$ is
Hilbert--Schmidt and the initial data is regular enough; that is, for
some $L>0$,
\begin{equation*}
|X_h^0|_{1}+\cF(X_h^{0})+|A_hX_h^{0}+P_hf(X_h^{0})|_1+|X_0|_1\le L\text{ for all }h>0,
\end{equation*}
where $\cF(v)=\int_{\cD}F(v)\,\dd x$ and
$|v|_1=\|A^{\frac{1}{2}}v\|=\norm{\nabla v}$, then
\begin{align*}
\lim_{h,k\to 0}\EE\sup_{0\le n\le N}\|X(t_n)-\Xbe^n\|^2=0.  
\end{align*}

The key result used in the proof is a maximal type moment bound on
$|\Xbej|_1$, which is established in Theorem~\ref{thm:H1moment} after
bootstrapping arguments.  There are various difficulties in
the proofs that are partly due to the finite element method.  First,
the finite element method is based on approximating the operator $A$
and not $A^2$. This is because the standard finite element functions
belong only to the domain of $A^{\frac12}$ but are not more
regular. Loosely speaking this means that $A^2_h\neq (A^2)_h$, which
makes already the deterministic finite element analysis more
challenging. Second, the presence of the finite element projection
$P_h$ in front of the semilinear term destroys some of the
dissipativity properties of $f$. While $f$ enjoys the dissipativity
property \eqref{eq:diss1}, and even
\begin{equation*}
\langle A^{\frac12}f(v),A^{\frac12}v\rangle 
=\langle \nabla f(v),\nabla v\rangle 
\geq-c|v|_1^2,
\end{equation*}
we only have 
\begin{equation*}
\langle P_hf(v_h),v_h\rangle
=\langle f(v_h),v_h\rangle
\geq -C_0,\quad v_h\in S_h,
\end{equation*}
and unfortunately
\begin{equation*}
\langle A_h^{\frac12}P_hf(v_h),A_h^{\frac12}v_h\rangle
=\langle A^{\frac12}P_hf(v_h),A^{\frac12}v_h\rangle
\ngeq-c|v_h|_1^2,\quad v_h\in S_h.
\end{equation*}
Because of the latter we can only establish a non-uniform moment
bound on $\|\Xbej\|$ in Lemma~\ref{lem:qest}.  As
\begin{equation*}
\langle Af(x),Ax\rangle\ngeq-c\|Ax\|^2
\text{ and }\langle A_hP_hf(v_h),A_hv_h\rangle\ngeq-c\|A_hv_h\|^2,
\end{equation*}
the proof of the main moment bound in Theorem \ref{thm:H1moment} is
rather tedious and not entirely straightforward.  In the proof we
bound the discrete version of the Ljapunov functional for the original
deterministic problem, see \eqref{eq:Lyapunovest}. Having maximal-type
moment bounds at hand we use the mild formulation of both
\eqref{eq:sac} and \eqref{eq:BE} to establish pathwise error bounds on
subsets of the probability space with large probability in
Theorem~\ref{thm:pconv}. This turns out to be sufficient, together
with some moment bounds, to show strong convergence of the numerical
scheme in Theorem \ref{thm:main}. Our method of proof does not give
rates for the strong convergence.


While strong convergence results for numerical schemes for SPDEs with
globally Lipschitz coefficients, or at least some sort of linear
growth condition, are abundant there are only few results on strong convergence of
discretization schemes for SPDEs with superlinearly growing
coefficients \cite{2016arXiv160105756B, GyM,MR3498982, JP,
  KLLAC, 2015arXiv151003684K, Kur}.  Furthermore, these papers
dominantly establish strong convergence of various numerical schemes
with no rate given (with a few exceptions), under some sort of global
monotonicity assumption on the drift term which is not valid for the
Cahn--Hilliard--Cook equation \eqref{eq:sac}.

 The analysis of numerical methods for SPDEs without a global
  monotonicity assumption is even less explored. For the
  Cahn--Hilliard--Cook equation \eqref{eq:sac} studied in the present
  paper, convergence (with rates) in probability is established for a
  finite difference scheme in \cite{CW}. In \cite{2014arXiv1401.0295H}
  strong convergence with rates is established for the spatial
  spectral Galerkin approximation (no time discretization) for
  \eqref{eq:sac} and the stochastic Burgers equation driven by trace
  class noise in spatial dimension $d=1$. The analysis is based on a
  general perturbation result and exponential integrability properties
  of the approximation process. Strong convergence of the finite
  element method without rate and without time discretization is
  proved in \cite{KLM,MR3273327} under stronger assumptions on the
  noise than in the present paper. There it is required that the
  operator composition $A^{\frac{\gamma}{2}}Q^{\frac12}$ for
  $\gamma>1$ is Hilbert--Schmidt, while here we only require this with
  $\gamma=1$.  Therefore, the present work can be viewed as the
  (non-trivial) extension of \cite{KLM,MR3273327} to a strongly
  convergent fully discrete scheme, still without a strong rate, but
  with improvements on the regularity requirement on the noise.  Both
  here and in \cite{KLM,MR3273327} the strategy is based on proving a
  priori moment bounds with large exponents and in higher order norms
  using energy arguments and bootstrapping followed by a pathwise
  Gronwall argument in the mild solution setting.  Finally, in
  connection to the Cahn--Hilliard--Cook equation we note that in
  \cite{MR3031667,MR2846757} the linearized Cahn--Hilliard--Cook
  equation is treated numerically which requires a significantly
  simpler analysis.

As further related work on numerical approximation of SPDEs without a global
monotonicity assumption we mention the pathwise convergence of
a spectral Galerkin method for the stochastic Burgers equation
studied in \cite{Blomker_Jentzen,Blomker_et_al}, while for the same
equation convergence in probability is established in \cite{P2001} for
the Backward Euler method. The stochastic Navier--Stokes equation is
considered in \cite{MR3081484,MR3022227}, in particular, in
\cite{MR3022227} the authors obtain a result similar to our
Theorem~\ref{thm:pconv} (stated in a slightly different form). 
Finally, we mention the recent work
\cite{2016arXiv160402053H}, where strong convergence is proved,
without rate, for a spectral nonlinearity-truncated accelerated
exponential Euler-type approximation for the stochastic
Kuramoto--Sivashinsky equation driven by space-time white noise in
spatial dimension $d=1$, an equation rather similar in structure to
the Cahn--Hilliard--Cook equation.

The paper is organized as follows. In Section~\ref{sec:prelim} we
collect some background material from stochastic and functional
analysis and introduce the finite element method in
Subsection~\ref{subsec:fem}. In Section~\ref{sec:eur} some known
results on the existence, uniqueness and regularity on the solution of
 \eqref{eq:sac} are recalled. Section~\ref{sec:momb} contains
moment bounds for the numerical solution, in particular, it contains
the main technical result of the paper, Theorem \ref{thm:H1moment}. In
Section~\ref{sec:conv} we prove a new error estimate for the
derivative of the error in the spatial semidiscretization of the
linear deterministic Cahn--Hilliard equation, \eqref{eq:dette1} in
Lemma~\ref{lem:dete}. Then we proceed to prove a pathwise error bound
in Theorem~\ref{thm:pconv} and the main strong convergence result
Theorem~\ref{thm:main}.

\section{Preliminaries}\label{sec:prelim}

\subsection{Norms and operators} Throughout the paper we will use
various norms for linear operators on a Hilbert space $H$ where the
latter is endowed with inner product $\langle\cdot,\cdot\rangle$ and
norm $\norm{\cdot}$. We denote by $\mathcal{L}(H)$, the space of
bounded linear operators on $H$ with the usual operator norm also denoted
by $\norm{\cdot}$. If for a selfadjoint positive semidefinite operator $T\in\cL(H)$,
the sum
\begin{align*}
\Tr T:=\sum_{k=1}^\infty\langle Te_k,e_k\rangle<\infty  
\end{align*}
for an orthonormal basis (ONB) $\{e_k\}_{k\in \mathbb{N}}$ of $H$,
then we say that $T$ is trace class. In this case $\Tr T$, the trace
of $T$, is independent of the choice of the ONB. If for an operator
$T\in\cL(H)$, the sum
\begin{align*}
\|T\|_{\HS}^2:=\sum_{k=1}^\infty\|Te_k\|^2<\infty  
\end{align*}
for an ONB $\{e_k\}_{k\in \mathbb{N}}$ of $H$, then we say that $T$ is
Hilbert--Schmidt and call $\|T\|_{\HS}$ the Hilbert--Schmidt norm of
$T$.  The Hilbert--Schmidt norm of $T$ is independent of the choice of
the ONB. We have the following well-known properties of the trace and
Hilbert--Schmidt norms, see, for example, \cite[Appendix C]{DPZ},
\begin{align}
\label{eq:hs}
\|T\|&\le \|T\|_{\HS},\quad \|TS\|_{\HS}\le \|T\|_{\HS}\|S\|,
\quad\|ST\|_{\HS}\le \|S\|\,\|T\|_{\HS},
\\
\label{eq:tr}
\Tr Q&=\|Q^{\frac12}\|_{\HS}^2=\|T\|^2_{\HS}=\|T^*\|_{\HS}^2,
\quad\text{ if $Q=TT^*$.}
\end{align}

Next we introduce spaces and norms associated with the operator
  $A$, the negative of the Neumann Laplacian. Let
$\cD \subset \mathbb{R}^d$, $d =1,2,3$, be a bounded convex domain
with polygonal boundary $\pt \cD$.  We denote by $\norm[L_p]{\cdot}$
the standard norm in $L_p(\cD)$. In particular, we define
$H = L_2(\cD)$ with its standard inner product $\inner{\cdot}{\cdot}$
and norm $\norm{\cdot}$, and
\begin{equation*}
\dot{H} = \Big \lbrace v \in H : \int_\cD v \,\dd x = 0 \Big \rbrace.
\end{equation*}
Let $P \colon H \to \dot{H}$ define the orthogonal projector. Then
$ (I-P)v = \abs{\cD}^{-1}\int_\cD v \,\dd x$ is the average of $v$.
We also denote by $H^k(\cD)$ the standard Sobolev space. We define
$A = -\Delta$, the negative of the Neumann Laplacian with
domain of definition
\begin{equation*}
D(A) = \Big \lbrace v \in H^2(\cD): \frac{\pt v}{\pt n} = 0\
\text{on}\ \pt \cD \Big \rbrace.
\end{equation*}
Then $A$ is a positive definite, selfadjoint, unbounded, linear
operator on $\dot{H}$ with compact inverse.  When extended to $H$ as
$Av=APv$ it has an orthonormal eigenbasis $\lbrace \varphi_j
\rbrace_{j=0}^\infty$ with corresponding eigenvalues $\lbrace
\lambda_j \rbrace_{j=0}^\infty$ such that
\begin{equation*}
0 = \lambda_0 < \lambda_1 \le \lambda_2 \le \cdots \le \lambda_j
\le \cdots ,
\quad \lambda_j \to \infty,
\end{equation*}
see, for example, \cite[Section 7.2]{Dav}. The first
eigenfunction is constant, $\varphi_0 = \abs{\cD}^{-\frac{1}{2}}$. 

We define 
\begin{align}
\label{eq:dotnorm}
& \snorm[\alpha]{v} 
= \Big( \sum_{j=1}^\infty 
\lambda_j^\alpha \abs{\inner{v}{\varphi_j}}^2 \Big)^{\frac{1}{2}},
\quad \inner[\alpha]{v}{w}
= \sum_{j=1}^\infty 
\lambda_j^\alpha \inner{v}{\varphi_j}\inner{w}{\varphi_j},
\quad \alpha \in\mathbb{R},\\
\label{eq:nondotnorm}
& \norm[\alpha]{v} = \big( \snorm[\alpha]{v}^2 +
  \abs{\inner{v}{\varphi_0}}^2\big)^{\frac{1}{2}} ,
\quad \alpha \ge0,
\end{align}
and corresponding spaces, for $\alpha\ge0$,
\begin{equation*}
\dot{H}^\alpha = D(A^{\frac{\alpha}{2}})
= \Big\lbrace v\in \dot{H}: \snorm[\alpha]{v} < \infty \Big\rbrace,
\quad H^\alpha = \Big \lbrace v\in H: \norm[\alpha]{v} < \infty \Big \rbrace.
\end{equation*}
For negative order $-\alpha<0$ we define $\dH^{-\alpha}$ by
taking the closure of $\dH$ with respect to $\snorm[-\alpha]{\cdot}$.
For integer order $\alpha=k=1,2$, 
the norm $\norm[k]{\cdot}$ is equivalent on $H^k$ to the
standard Sobolev norm $\norm[H^k(\cD)]{\cdot}$. More precisely, 
\begin{align*} 
\norm[1]{v} &= \big(\snorm[1]{v}^2 + \abs{\inner{v}{\varphi_0}}^2 \big)^{\frac12}
=\big(\norm{\nabla v}^2 + \abs{\inner{v}{\varphi_0}}^2 \big)^{\frac12}, 
\\
\norm[2]{v} &= \big(\snorm[2]{v}^2 + \abs{\inner{v}{\varphi_0}}^2 \big)^{\frac12}
=\big(\norm{\Delta v}^2 + \abs{\inner{v}{\varphi_0}}^2 \big)^{\frac12}
\end{align*}
are equivalent to the standard norms $\norm[H^k(\cD)]{v}$, $k=1,2$,
by the Poincar\'e inequality and the regularity estimate for the
elliptic Neumann problem.  

We recall the fact the operator $-A^2$ is the infinitesimal generator
of an analytic semigroup $E(t)=\ee^{-tA^2}$ on $H$,
\begin{equation}\label{chsg}
\begin{split}
E(t)v= \ee^{-tA^2} v
& = \sum_{j=0}^\infty \ee^{-t \lambda_j^2}\inner{v}{\varphi_j}\varphi_j
   = \sum_{j=1}^\infty \ee^{-t \lambda_j^2}\inner{v}{\varphi_j}\varphi_j
   + \inner{v}{\varphi_0}\varphi_0
\\ & 
= P\ee^{-tA^2}v+ (I-P)v.
\end{split}
\end{equation}
By expansion in the eigenbasis of $A$ and using Parseval's identity we
easily obtain 
\begin{equation}\label{eq:analytic}
\norm{A^\alpha E(t)v} 
\le C t^{-\frac{\alpha}{2}}  \norm{v},
\quad v\in H,\ \alpha \ge 0.
\end{equation}
and
\begin{align}
  \label{eq:analytic1}
  \Big( \int_0^t s^{2j}\|A^{2j+1}E(s)v\|^2\,\dd s \Big)^{1/2}
  \le C\|v\|,
\quad v\in H, \ j=0,1,2\dots. 
\end{align}
Here $C$ depends on $\alpha$ and $j$, respectively.

\subsection{The finite element method} \label{subsec:fem} 
Let $\lbrace \cT_h \rbrace_{h>0}$ denote a family of regular
triangulations of $\cD$ with maximal mesh size $h$.  Let $S_h$ be the
space of continuous functions on $\cD$, which are piecewise
polynomials of degree $\le1$ with respect to $\cT_h$. Hence,
$S_h \subset H^1$. We also define $\dot{S}_h = PS_h$; that is,
\begin{equation*}
\dot{S}_h = \Big \lbrace v_h \in S_h: \int_\cD v_h \,\dd x = 0 \Big \rbrace.
\end{equation*}
The space $\dot{S}_h$ is introduced only for the purpose of theory but
not for computation. Now we define the discrete Laplacian
$-A_h \colon S_h \to S_h$ by
\begin{equation*}
\inner{A_h v_h}{w_h} = \inner{\nabla v_h}{\nabla w_h},
\quad \forall v_h \in S_h,\, w_h \in S_h.
\end{equation*}
The operator $A_h$ is selfadjoint, positive definite on $\dot{S}_h$,
positive semidefinite on $S_h$, and $A_h$ has an orthonormal
eigenbasis $\lbrace \varphi_{h,j} \rbrace_{j=0}^{N_h}$ with
corresponding eigenvalues $\lbrace \lambda_{h,j} \rbrace_{j=0}^{N_h}$.
We have
\begin{equation*}
0 = \lambda_{h,0} < \lambda_{h,1} \le \cdots \le \lambda_{h,j} \le
\cdots \le \lambda_{h,N_h}
\end{equation*}
and $\varphi_{h,0} = \varphi_0 = \abs{\cD}^{-\frac{1}{2}}$.
Moreover, we define $E_h(t)=\ee^{-tA_h^2} \colon S_h \to S_h$ by
  \begin{equation*}
  E_h(t)v_h=\ee^{-t A_h^2} v_h
  = \sum_{j=0}^{N_h} \ee^{-t \lambda_{h,j}}
   \inner{v_h}{\varphi_{h,j}}\varphi_{h,j}
\end{equation*}
and the orthogonal projector $P_h \colon H \to S_h$ by
\begin{equation}\label{P_h}
\inner{P_h v}{w_h} = \inner{v}{w_h}\quad \forall v \in H,\, w_h \in S_h.
\end{equation}
Clearly, $P_h \colon \dot{H} \to \dot{S}_h$ and
\begin{align*} 
E_h(t)P_h v = PE_h(t) P_h v + (I-P)v.  
\end{align*} 
Likewise, for its time discrete analog $\Rk^n=(I+\stig{k}\,
A_h^2)^{-n}$, we have 
\begin{align} \label{chsg2}
\Rk^nP_h v = P\Rk^n P_h v + (I-P)v. 
\end{align} 
We have the discrete analogs of \eqref{eq:analytic},
\begin{equation}\label{eq:fem2}
\norm{A_h^\alpha E_h(t)P_hv} 
\le C t^{-\frac{\alpha}{2}} \norm{v}, 
\quad 
\norm{A_h^\alpha \Rk^nP_hv} 
\le C t_n^{-\frac{\alpha}{2}} 
\norm{v},\quad v\in H,\ \alpha \ge 0,
\end{equation} 
where the constants $C$ depend on $\alpha$ but not on $h$ and $k$. Similarly
to \eqref{eq:analytic}, these are proved by expansion in the
eigenbasis of $A_h$ and Parseval's identity.  For example, the first
constant is $C=(\sup_{s\in[0,\infty)}s^\alpha\ee^{-2s})^{\frac12}$.

Finally, we define the Ritz projector $R_h \colon \dot{H}^1 \to \dot{S}_h$ by
\begin{align*}
\inner{\nabla R_h v}{\nabla w_h} = \inner{\nabla v}{\nabla w_h},
\quad \forall v \in \dot{H}^1,\, w_h \in \dot{S}_h.
\end{align*}
We extend it as $R_h \colon {H}^1 \to {S}_h$ by
\begin{align}\label{R_h}
  R_hv=R_hPv+(I-P)v,  \quad v \in {H}^1.
\end{align}
We then have the following bound for $R_hv-v=(R_h-I)Pv$ (cf.\
\cite[Chapt.~1]{Thomeebook})
\begin{align}
  \label{eq:fem1}
\norm{(R_h-I)v}\le Ch^2\norm{Av},  \quad v\in \dot{H}^2.
\end{align}
Finally, we define norms on $\dot{S}_h$, analogous to the norms
  $\snorm[\alpha]{\cdot}$ on $\dH^\alpha$: 
\begin{align}
  \label{eq:discretenorm}
  \snorm[\alpha,h]{v_h}=\norm{A_h^{\alpha/2}v_h}
= \Big( \sum_{j=1}^{N_h}
\lambda_{j,h}^\alpha \abs{\inner{v}{\varphi_{j,h}}}^2 \Big)^{\frac{1}{2}},
\quad v_h\in\dot{S}_h,\ \alpha \in\mathbb{R}. 
\end{align}
The corresponding scalar products
are denoted $\inner[\alpha,h]{\cdot}{\cdot}$.  We note that 
\begin{equation}\label{chc2:a1}
\snorm[1]{v_h} = \norm{A^{\frac{1}{2}} v_h}
= \norm{\nabla v_h} = \norm{A_h^{\frac{1}{2}}v_h}
=\snorm[1,h]{v_h}, \quad v_h \in \dot{S}_h. 
\end{equation}
We assume that $P_h$ is bounded
with respect to the $\dH^1$ norm
\begin{equation}\label{x}
\snorm[1]{P_h v} \le C\snorm[1]{v},\quad v \in {\dH}^1.
\end{equation}
This holds, for example, if the mesh family
$\lbrace \cT_h \rbrace_{h > 0}$ is quasi-uniform.
By combining this with \eqref{chc2:a1}, we obtain
\begin{align}
  \label{chc2:a2}
\norm{A_h^{1/2}P_hv}
=\snorm[1]{P_hv}
\le  C\snorm[1]{v}
=C\norm{A^{1/2}v}. 
\end{align}

\subsection{Useful inequalities} We will use the
Burkholder--Davis--Gundy inequality for It\^o-integrals of the form
$\int_0^t\langle \eta(s),\dd \tilde{W}(s)\rangle$, where $\eta$
  is a predictable $H$-valued stochastic process and $\tilde{W}$ is a
$\tilde{Q}$-Wiener process in $H$.  For this kind of integral,
the Burkholder--Davis--Gundy inequality, \cite[Lemma 7.2]{DPZ}, takes
the form
\begin{equation}\label{eq:rbdg}
\EE\sup_{t\in [0,T]}
\Big|\int_0^t\langle \eta(s),\dd \tilde{W}(s)\rangle\Big|^p
\le C
\EE\Big(
\int_0^{T}\|\tilde{Q}^{\frac12}\eta(s)\|^2\,\dd s
\Big)^{\frac{p}{2}}, 
\quad p\ge 2,
\end{equation}
where $C$ depends on $p$.

Also, if $Y$ is an $H$-valued centered Gaussian random variable
with covariance operator $\tilde{Q}$, then, by \cite[Corollary
2.17]{DPZ}, we can bound its $p$-th moments via its covariance
operator as
\begin{equation}\label{eq:es1a}
\EE\|Y\|^{2p}\le C(\EE\|Y\|^2)^{p}
=C (\Tr \tilde{Q})^{p}=C\|\tilde{Q}^{\frac12}\|_{\HS}^{2p}, 
\quad p\ge1, 
\end{equation}
where $C$ depends on $p$.  In particular, for an  It\^o integral
$Y=\int_s^tR\,\dd W(r)=R(W(t)-W(s))$, where $R$ is a constant, possibly unbounded,
operator on $H$ and $W$ is a $Q$-Wiener process, 
the inequality \eqref{eq:es1a} reads
\begin{equation}\label{eq:es1b}
\EE\Big\|\int_s^tR\,\dd W(r)\Big\|^{2p}\leq C(t-s)^p\|RQ^{1/2}\|_{\HS}^{2p}.
\end{equation} 
If $p=1$, the inequality in \eqref{eq:es1b} becomes an equality with
$C=1$.  The inequality
\begin{equation}\label{eq:psum}
\Big|\sum_{j=K}^Ma_j\Big|^p
\leq |M-K+1|^{p-1}\sum_{j=K}^M|a_j|^p, \quad p\ge1,
\end{equation}
will be frequently utilized; it is a direct consequence of H\"older's
inequality.

\section{Existence, uniqueness and regularity}\label{sec:eur}
Existence, uniqueness, and regularity of weak solutions to \eqref{eq:sac}
has been studied in \cite{MR1359472} with some minor improvements in
\cite{KLM}. Note that here we assume that $X_0$ is deterministic and
that $X_0\in\dH$, so that $X(t)\in\dH$. We summarize the results:
\begin{thm}\label{thm:existenceandregularityofX}
If $\|A^{\frac12}Q^{\frac12}\|_{\HS}<\infty$, 
$|X_0|_1<\infty$, and $T<\infty$, then there is a
unique weak solution $X$ of \eqref{eq:sac}; that is, 
 an adapted $H$-valued
process $X$, which is continuous almost surely and satisfies the equation
\begin{align}\label{weaksol}
\inner{X(t)}{v} - \inner{X_0}{v}
+ \int_0^t \big(\inner{X(s)}{A^2 v}
+ \inner{f(X(s))}{A v}\big)\,\dd s
=  \inner{ W(t)}{v}
\end{align}
almost surely for all $v \in \dot{H}^4=D(A^2)$, $t\in [0,T]$.
Furthermore,
there is $C>0$ such that
\begin{equation}\label{eq:esup}
\mathbf{E}\sup_{t\in [0,T]}|X(t)|_1^2+\mathbf{E}\sup_{t\in [0,T]}\|X(t)\|_{L_4}^4\le C.
\end{equation}
In addition, $X$ is also a mild solution; that is, it satisfies
  the equation
\begin{equation}\label{eq:mildsac}
X(t)=E(t)X_0-\int_0^t E(t-s)Af(X(s))\,\dd s+\int_0^t E(t-s)\,\dd W(s),
\end{equation}
almost surely.
\end{thm}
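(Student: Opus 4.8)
We only sketch the argument, which is classical and follows \cite{MR1359472,KLM}; the strategy is to remove the stochastic forcing and then exploit the gradient (Cahn--Hilliard) structure of the resulting pathwise equation. First I would analyze the stochastic convolution $\WA(t)=\int_0^t E(t-s)\,\dd W(s)$, which solves the linearized equation $\dd\WA+A^2\WA\,\dd t=\dd W$ with $\WA(0)=0$. Using the factorization method together with \eqref{eq:analytic}--\eqref{eq:analytic1} and, crucially, the hypothesis $\norm[\HS]{A^{1/2}Q^{1/2}}<\infty$, one obtains a modification of $\WA$ with almost surely continuous paths in $\dH^1$ satisfying $\EE\sup_{t\in[0,T]}\snorm[1]{\WA(t)}^2<\infty$; the point is that the one extra spatial derivative encoded in $\norm[\HS]{A^{1/2}Q^{1/2}}<\infty$ is exactly what \eqref{eq:es1a} converts into an $\dH^1$ bound. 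Setting $Y=X-\WA$, the process $Y$ then solves, for almost every fixed $\omega$, the \emph{deterministic} equation $Y'+A^2Y+Af(Y+\WA)=0$, $Y(0)=X_0$, with no stochastic integral; this can be solved pathwise by a spectral Galerkin scheme in the eigenbasis of $A$, with a priori bounds uniform in the Galerkin dimension.

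The moment bound \eqref{eq:esup} comes from the energy (Ljapunov) functional $\cE(v)=\tfrac12\snorm[1]{v}^2+\cF(v)$, where $\cF(v)=\int_\cD F(v)\,\dd x$ and $\cE'(v)=Av+f(v)$ is the chemical potential. Applying It\^o's formula to $t\mapsto\cE(X(t))$ and using $\dd X=-A\cE'(X)\,\dd t+\dd W$ gives
\begin{equation*}
\begin{aligned}
\cE(X(t))+\int_0^t\snorm[1]{AX(s)+f(X(s))}^2\,\dd s
&=\cE(X_0)+\int_0^t\inner{AX(s)+f(X(s))}{\dd W(s)}\\
&\quad+\tfrac12\int_0^t\Tr\big(\cE''(X(s))Q\big)\,\dd s.
\end{aligned}
\end{equation*}
The dissipation term on the left is nonnegative; the It\^o correction splits as $\Tr(AQ)+\Tr(F''(X)Q)$, where $\Tr(AQ)=\norm[\HS]{A^{1/2}Q^{1/2}}^2<\infty$ and $\Tr(F''(X)Q)$ is controlled via $F''(s)\ge-\betac^2$ and the quadratic growth of $F''$. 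Since the mean-zero property of $W$ gives $\norm{Q^{1/2}(AX+f(X))}\le C\snorm[1]{AX+f(X)}$ with $C$ depending on $\norm[\HS]{A^{1/2}Q^{1/2}}$, the Burkholder--Davis--Gundy inequality \eqref{eq:rbdg} bounds the supremum of the martingale term by $\epsilon\,\EE\int_0^T\snorm[1]{AX+f(X)}^2\,\dd s+C_\epsilon$, which is absorbed into the dissipation term. A Gr\"onwall argument then yields $\EE\sup_{t\in[0,T]}\cE(X(t))\le C$, and, because $\cF(v)\ge c\norm[L_4]{v}^4-C$ by \eqref{eq:F}, this is precisely \eqref{eq:esup}. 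Rigorously the computation is performed on the Galerkin level, where It\^o's formula applies to a genuine finite-dimensional diffusion and the bounds pass to the limit.

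For uniqueness, let $X_1,X_2$ be two solutions with the same data; the difference $e=X_1-X_2\in\dH$ satisfies the pathwise equation $e'+A^2e+A\big(f(X_1)-f(X_2)\big)=0$, $e(0)=0$, the noise having cancelled. Testing with $A^{-1}e$ and using $\inner{f(X_1)-f(X_2)}{e}\ge-\betac^2\norm{e}^2$ (which follows from $F''\ge-\betac^2$, cf.\ \eqref{eq:taylorf}) together with the interpolation $\norm{e}^2\le\snorm[1]{e}\,\snorm[-1]{e}$ valid on $\dH$, one obtains after absorbing $\snorm[1]{e}^2$,
\begin{equation*}
\frac{\dd}{\dd t}\snorm[-1]{e}^2+\snorm[1]{e}^2\le C\snorm[-1]{e}^2,
\end{equation*}
so Gr\"onwall's inequality and $e(0)=0$ force $e\equiv0$. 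Finally, the equivalence of the weak form \eqref{weaksol} with the mild formulation \eqref{eq:mildsac} is standard: one tests \eqref{weaksol} against $A^2E(t-\cdot)v$ and invokes the stochastic Fubini theorem together with the self-adjointness of $E$.

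The genuine obstacle is that $f$ is neither globally Lipschitz nor monotone, so standard variational/monotone-operator theory does not apply directly; the argument must exploit the specific gradient structure through $\cE$ and the one-sided bounds \eqref{eq:diss1} and \eqref{eq:taylorf}. Consequently the delicate step is passing to the limit in the nonlinear term $Af(Y+\WA)$ of the Galerkin scheme, which needs strong (compactness) convergence of the approximations rather than mere weak convergence; this is supplied by the uniform $\snorm[1]{\cdot}$ and $\norm[L_4]{\cdot}$ bounds together with an Aubin--Lions type compactness argument in time.
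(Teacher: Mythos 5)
Your proposal is correct in outline and follows essentially the same route as the paper: the paper does not prove this theorem itself but cites \cite{MR1359472} and \cite{KLM}, and the arguments in those references are exactly the ones you sketch (splitting off the stochastic convolution $W_A$ and solving the resulting pathwise equation by a Galerkin scheme, moment bounds via It\^o's formula applied to the Ljapunov functional with the martingale term absorbed by the Burkholder--Davis--Gundy inequality, and uniqueness by an $\dot{H}^{-1}$ energy estimate for the difference of two solutions). The technical points you defer --- the $\dot{H}^1$-continuity of $W_A$ under $\|A^{1/2}Q^{1/2}\|_{\HS}<\infty$, the upper bound on $\Tr(F''(X)Q)$ via Sobolev embedding, and the Aubin--Lions compactness step for the nonlinear term --- are precisely those carried out in the cited references, so there is no gap to report.
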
 

We also have pathwise H\"older regularity in time:
\begin{prop}\label{prop:ph}
  Let $\|A^{\frac{1}{2}}Q^{\frac12}\|_{\HS}<\infty$ and
  $|X_0|_1<\infty$. Then, for all $\gamma\in[0,\frac12)$, there is 
  an almost surely finite nonnegative random variable $K$ such
  that, almost surely,
\begin{align*}
\sup_{t\neq s\in [0,T]}\frac{\|X(t)-X(s)\|}{|t-s|^{\gamma}}\le K.  
\end{align*}
\end{prop}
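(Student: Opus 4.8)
The plan is to estimate the temporal increments of the mild solution \eqref{eq:mildsac} term by term. For $0\le s<t\le T$ write
\begin{equation*}
X(t)-X(s)=\big(E(t)-E(s)\big)X_0-\big(\Phi(t)-\Phi(s)\big)+\big(W_A(t)-W_A(s)\big),
\end{equation*}
with $\Phi(t)=\int_0^tE(t-r)Af(X(r))\,\dd r$ and $W_A(t)=\int_0^tE(t-r)\,\dd W(r)$. I would bound the deterministic term and the nonlinear convolution pathwise, and treat the stochastic convolution by first proving moment bounds and then invoking the Kolmogorov--Chentsov continuity criterion. Because the target is a single almost surely finite random constant $K$ valid for all pairs $(s,t)$, Kolmogorov's theorem is exactly the right tool; at the end the three Hölder constants are added.

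The two workhorses are the smoothing bound \eqref{eq:analytic}, $\norm{A^\alpha E(\tau)v}\le C\tau^{-\alpha/2}\norm{v}$, and its companion
\begin{equation*}
\norm{(E(\tau)-I)A^{-\theta}v}\le C\tau^{\theta/2}\norm{v},\quad 0\le\theta\le2,
\end{equation*}
which follows by expanding in the eigenbasis of $A$ and using Parseval's identity together with $|\ee^{-x}-1|\le Cx^{\theta/2}$, just as \eqref{eq:analytic} is proved. To use them I split each convolution over $[0,s]$ and $[s,t]$: on $[0,s]$ I factor out $E(t-s)-I$ and pair it with $A^{-\theta}$, letting the surplus smoothing factor (here $A^{\theta+1}E(s-r)$) absorb the remaining powers of $A$, while on $[s,t]$ the integrand is estimated directly. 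For $\Phi$ this gives, for any $\theta<1$,
\begin{equation*}
\norm{\Phi(t)-\Phi(s)}\le C(t-s)^{\theta/2}\sup_{r\le T}\norm{f(X(r))},
\end{equation*}
so an exponent arbitrarily close to $\tfrac12$. The essential point is that $\sup_{r\le T}\norm{f(X(r))}$ is almost surely finite: since $f$ is cubic, $\norm{f(v)}\le C(1+\norm[L_6]{v}^3)$, and in $d\le3$ the embedding $\dH^1\hookrightarrow L_6$ with $X(r)\in\dH$ yields $\norm{f(X(r))}\le C(1+|X(r)|_1^3)$, whose supremum over $[0,T]$ is almost surely finite by \eqref{eq:esup}.

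For the stochastic convolution I would use the It\^o isometry on
\begin{equation*}
W_A(t)-W_A(s)=\int_0^s(E(t-s)-I)E(s-r)\,\dd W(r)+\int_s^tE(t-r)\,\dd W(r),
\end{equation*}
whence, the two integrals being orthogonal,
\begin{equation*}
\EE\norm{W_A(t)-W_A(s)}^2=\int_0^s\norm[\HS]{(E(t-s)-I)E(s-r)Q^{\frac12}}^2\,\dd r+\int_s^t\norm[\HS]{E(t-r)Q^{\frac12}}^2\,\dd r.
\end{equation*}
Peeling off operator norms with \eqref{eq:hs} and retaining one factor $A^{1/2}Q^{1/2}$, the hypothesis $\HSAQ<\infty$ bounds both integrals by $C(t-s)^{2\gamma}$ for any $\gamma<\tfrac12$ (the diagonal piece on $[s,t]$, of exact order $t-s$, is the sharp one). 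As $W_A(t)-W_A(s)$ is centered Gaussian, \eqref{eq:es1a} upgrades this to $\EE\norm{W_A(t)-W_A(s)}^{2p}\le C_p(t-s)^p$ for every $p\ge1$; taking $p$ large and applying Kolmogorov--Chentsov gives an almost surely finite constant and pathwise Hölder continuity of any exponent below $\tfrac12-\tfrac1{2p}$, hence any $\gamma<\tfrac12$.

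The main obstacle is the deterministic term $E(t)X_0$. Writing $\big(E(t)-E(s)\big)X_0=(E(t-s)-I)E(s)X_0$ and using the companion estimate with $\theta=\tfrac12$ together with the contractivity of $E(s)$ on $\dH^1$, one obtains only
\begin{equation*}
\norm{\big(E(t)-E(s)\big)X_0}\le C(t-s)^{1/4}|X_0|_1,
\end{equation*}
and the frequencies $\lambda_j^2\sim(t-s)^{-1}$ show this exponent is sharp for data merely in $\dH^1$. Thus the whole range $\gamma<\tfrac12$ hinges on the initial term: the same computation with $\theta=1$ gives exponent $\tfrac12$ provided $X_0\in\dH^2$, so either additional regularity of $X_0$ must be exploited or the exponent contributed by this component is only $\tfrac14$, while the nonlinear and stochastic convolutions already supply any $\gamma<\tfrac12$. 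The concluding step is the routine combination of the three almost surely finite Hölder constants into a single $K$.
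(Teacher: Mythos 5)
Your route is the intended one: the paper does not actually write out a proof of Proposition \ref{prop:ph} but refers to the Allen--Cahn analogue \cite{KLLAC}, whose argument has exactly your structure (mild formulation, smoothing estimates for the deterministic and nonlinear convolution terms, moment bounds plus the Kolmogorov--Chentsov criterion for the stochastic convolution). Your treatment of the two convolutions is correct: the bound $\norm{f(X(r))}\le C(1+\snorm[1]{X(r)}^3)$ combined with \eqref{eq:esup}, the splitting with $\theta<1$, the It\^o isometry computation, the Gaussian moment bound \eqref{eq:es1a}, and Kolmogorov with large $p$ all go through and deliver every exponent $\gamma<\tfrac12$ for those two terms. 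The genuine gap is the one you flag but leave open: with only $|X_0|_1<\infty$ the initial term gives $\norm{(E(t)-E(s))X_0}\le C(t-s)^{1/4}|X_0|_1$ and nothing more, so as written your argument proves the proposition only for $\gamma\le\tfrac14$, not on the stated range $[0,\tfrac12)$.

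Moreover, this gap cannot be closed under the stated hypothesis, so the defect lies in the statement rather than in your method. Your sharpness remark upgrades to a counterexample: pick a lacunary $X_0=\sum_k c_k\varphi_{j_k}$ with $c_k>0$, $\lambda_{j_k}c_k^2=k^{-2}$ and $\lambda_{j_k}\ge k^k$; then $|X_0|_1^2=\sum_k k^{-2}<\infty$, while along $t_k=\lambda_{j_k}^{-2}$ one has $\norm{(E(t_k)-I)X_0}\ge(1-\ee^{-1})c_k$ and $c_k/t_k^{\gamma}=\lambda_{j_k}^{2\gamma-1/2}/k\to\infty$ for every fixed $\gamma>\tfrac14$. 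Since your estimates show that the other two terms of $X(t_k)-X(0)$ are almost surely $O(t_k^{\gamma'})$ for any $\gamma'<\tfrac12$, the solution itself fails to be H\"older-$\gamma$ at $t=0$ for every $\gamma>\tfrac14$ with such data. The discrepancy comes from transplanting \cite{KLLAC}: there the semigroup is generated by $A$, so $\norm{(\ee^{-\tau A}-I)A^{-1/2}}\le C\tau^{1/2}$ and $\dH^1$ data yields exponent $\tfrac12$, whereas here the generator is $A^2$ and the quartic parabolic scaling converts $\dH^1$ into exponent $\tfrac14$ only; the full range $\gamma<\tfrac12$ requires $X_0\in\dH^{4\gamma}$, e.g.\ $X_0\in\dH^2$. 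This is harmless where the proposition is invoked: in Theorem \ref{thm:pconv} the standing assumption \eqref{eq:maincond} keeps $A_hX_h^0+P_hf(X_h^0)$ bounded in $\dH^1$ uniformly in $h$, which in the limit $h\to0$ yields $AX_0+f(X_0)\in \dH^1$ and hence $AX_0\in H$, i.e.\ $X_0\in\dH^2$; under that regularity your own computation with $\theta=1$ closes the argument. So to turn your proposal into a complete proof you should either strengthen the hypothesis to $|X_0|_2<\infty$ (or $|X_0|_{4\gamma}<\infty$) or weaken the conclusion to $\gamma\in[0,\tfrac14]$.
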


We omit the proof as it is analogous to the proof of
\cite[Proposition 3.2]{KLLAC}.

\section{Moment bounds for the discrete solution}\label{sec:momb}
We start by proving a preliminary moment bound which will be used
later on in a bootstrapping argument. Throughout the proofs,
  $C$ denotes a generic non-negative constant that is independent of
  the discretization parameters $h$ and $k$ and may change from line
  to line.

We recall our assumption that
$X_0\in\dH$, so that $X_h^0=P_hX_0\in\dot{S}_h$ and
hence $X_h^j\in \dot{S}_h$ for $0\le j\le N$.  

\begin{lem}\label{lem:Hmoment}
  Let $p\geq 1$.  If $\|A_h^{-1/2}P_hQ^{1/2}\|_{\HS}\le K$ and
    $|X_{h}^0|_{-1,h}\le L$ for all $h>0$, then there exists $C>0$
    depending on $p,T,K,L$, such that, for all $h,k>0$, 
\begin{align}
\label{eq:min1}
&\EE\Big(\sup_{1\leq j\leq N}|\Xbej|_{-1,h}^{2p}\Big)\leq C,
\\ &
\label{eq:min2}
\EE\Big( 
\sum_{j=1}^N\big(|\Xbej-\Xbe^{j-1}|_{-1,h}^2+k|\Xbej|^2_1\big)\Big)^{p}\leq C.
\end{align}
\end{lem}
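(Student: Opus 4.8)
The plan is to run a discrete energy estimate in the $\snorm[-1,h]{\cdot}$ norm and then to dominate the stochastic contribution by a \emph{hidden Gronwall} absorption. Since $\Xbej\in\dot{S}_h$ and, because the average of $W$ vanishes, $P_h\Wj\in\dot{S}_h$, the elements $A_h^{-1}\Xbej$ and $A_h^{-1/2}P_h\Wj$ are well defined. I pair \eqref{eq:BE} with $A_h^{-1}\Xbej$ in $H$. Using that $A_h,A_h^{1/2},P_h$ are selfadjoint, that $A_hA_h^{-1}=I$ on $\dot{S}_h$, and that $P_h$ restricts to the identity on $S_h$, the three deterministic terms reduce to
\[
\inner{\Xbej-\Xbe^{j-1}}{A_h^{-1}\Xbej}
=\tfrac12\big(\snorm[-1,h]{\Xbej}^2-\snorm[-1,h]{\Xbe^{j-1}}^2+\snorm[-1,h]{\Xbej-\Xbe^{j-1}}^2\big)
\]
by polarization, $k\inner{A_h^2\Xbej}{A_h^{-1}\Xbej}=k\snorm[1]{\Xbej}^2$, and $k\inner{A_hP_hf(\Xbej)}{A_h^{-1}\Xbej}=k\inner{f(\Xbej)}{\Xbej}\ge-kC_0$ by the dissipativity \eqref{eq:diss1}. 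Writing $a_j=\snorm[-1,h]{\Xbej}^2$, $d_j=\snorm[-1,h]{\Xbej-\Xbe^{j-1}}^2$, $e_j=k\snorm[1]{\Xbej}^2$, splitting the noise term as $\inner{P_h\Wj}{A_h^{-1}\Xbej}=\inner{P_h\Wj}{A_h^{-1}\Xbe^{j-1}}+\inner{A_h^{-1/2}P_h\Wj}{A_h^{-1/2}(\Xbej-\Xbe^{j-1})}$, and absorbing the last inner product into $\tfrac14 d_j+\snorm[-1,h]{P_h\Wj}^2$ by Cauchy--Schwarz and Young, I obtain after summing over $j=1,\dots,n$ (using $a_0\le L^2$ and $nk\le T$)
\[
\tfrac12 a_n+\tfrac14\sum_{j=1}^n d_j+\sum_{j=1}^n e_j\le C_1+S_N+M_n,\quad 1\le n\le N,
\]
with $C_1=\tfrac12 L^2+TC_0$, $S_N=\sum_{j=1}^N\snorm[-1,h]{P_h\Wj}^2$, and the discrete martingale $M_n=\sum_{j=1}^n\inner{P_h\Wj}{A_h^{-1}\Xbe^{j-1}}$.

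Since the bounds for $1\le p<2$ follow from the case $p=2$ by Jensen's inequality, I may assume $p\ge2$. Both stochastic quantities are controlled by the hypothesis $\|A_h^{-1/2}P_hQ^{1/2}\|_{\HS}\le K$. Each $\Wj$ is centered Gaussian with covariance $kQ$ and independent of $\cF_{t_{j-1}}$; hence $\EE\snorm[-1,h]{P_h\Wj}^2=k\|A_h^{-1/2}P_hQ^{1/2}\|_{\HS}^2\le kK^2$, and the Gaussian moment bound \eqref{eq:es1b} (with $R=A_h^{-1/2}P_h$) together with Minkowski's inequality in $L^p$ yields $\EE S_N^p\le C(TK^2)^p$. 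For the martingale I write $M_n=\int_0^{t_n}\inner{\eta(s)}{\dd W(s)}$ with the predictable integrand $\eta(s)=A_h^{-1}\Xbe^{j-1}$ for $s\in(t_{j-1},t_j]$, so that \eqref{eq:rbdg} applies. The crucial operator estimate is that, for $v_h\in\dot{S}_h$,
\[
\norm{Q^{1/2}A_h^{-1}v_h}
=\norm{(A_h^{-1/2}P_hQ^{1/2})^{*}A_h^{-1/2}v_h}
\le\|A_h^{-1/2}P_hQ^{1/2}\|_{\HS}\,\snorm[-1,h]{v_h}
\le K\snorm[-1,h]{v_h},
\]
where I used $P_hA_h^{-1}v_h=A_h^{-1}v_h$ and $\|T^{*}\|\le\|T^{*}\|_{\HS}=\|T\|_{\HS}$ from \eqref{eq:hs}--\eqref{eq:tr}. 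Consequently $\int_0^{t_n}\norm{Q^{1/2}\eta(s)}^2\,\dd s\le K^2k\sum_{j=1}^n a_{j-1}\le K^2T\,\Phi_n$, where $\Phi_n=\max_{0\le m\le n}a_m$, and \eqref{eq:rbdg} gives $\EE\sup_{m\le n}|M_m|^p\le C\,\EE(K^2T\Phi_n)^{p/2}\le C\,(\EE\Phi_n^p)^{1/2}$.

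The inequality from the first step implies $\tfrac12\Phi_n\le C_1+S_N+\sup_{m\le n}|M_m|$, whence, with $u_n:=\EE\Phi_n^p$,
\[
u_n\le C\big(1+\EE S_N^p+\EE\sup_{m\le n}|M_m|^p\big)\le C\big(1+u_n^{1/2}\big).
\]
The main obstacle is precisely this self-referential appearance of $\Phi_n$ inside the martingale bracket; it is removed by Young's inequality, which absorbs $C u_n^{1/2}$ into $\tfrac12 u_n$ and leaves $u_n\le C$ uniformly in $n,h,k$, \emph{provided} $u_n<\infty$ a priori. To legitimize the absorption I first localize with the stopping index $\tau_R=\min\{n:a_n>R\}$, run the argument for the stopped quantities to obtain an $R$-independent bound, and let $R\to\infty$ by monotone convergence; this proves \eqref{eq:min1}. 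Finally, since $\tfrac14\sum_{j=1}^N d_j+\sum_{j=1}^N e_j\le C_1+S_N+\sup_n|M_n|$, the bounds $\EE S_N^p\le C$ and $\EE\sup_n|M_n|^p\le C(\EE\Phi_N^p)^{1/2}\le C$ (now finite) give $\EE\big(\sum_{j=1}^N(d_j+e_j)\big)^p\le C$, which is \eqref{eq:min2}.
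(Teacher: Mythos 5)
Your argument is correct, and its backbone — pairing \eqref{eq:BE} with $A_h^{-1}\Xbej$, the identity \eqref{eq:betrick}, the dissipativity \eqref{eq:diss1}, splitting the noise term and absorbing the increment part by Young, then summing — is exactly the paper's. Where you genuinely diverge is in how the estimate is closed. The paper raises the BDG bracket to the power $p$ (via Young) and applies H\"older/\eqref{eq:psum} to get $(Nk)^{p-1}\sum_j k\,\EE|\Xbe^{j-1}|_{-1,h}^{2p}$ on the right-hand side, producing the discrete Gronwall inequality \eqref{eq:HmomentGron}; a priori finiteness is supplied by induction on $n$, and Gronwall's lemma then gives \eqref{eq:min1}. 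You instead bound the bracket by $K^2T\,\Phi_n$, where $\Phi_n=\max_{0\le m\le n}|\Xbe^m|_{-1,h}^2$, so that after Cauchy--Schwarz the martingale contribution is $C(\EE\Phi_n^p)^{1/2}$ — the square root of the very quantity being estimated — and you absorb it by Young's inequality, with a stopping-time localization replacing the paper's induction. Both closings are standard; yours avoids Gronwall and hence yields constants growing polynomially rather than exponentially in $T$, and your reduction to $p\ge 2$ via Jensen keeps the application of \eqref{eq:rbdg} within its stated range (the paper applies it for all $p\ge1$, which is true of BDG but outside the range stated in \eqref{eq:rbdg}). Your operator estimate $\|Q^{1/2}A_h^{-1}v_h\|\le K\,|v_h|_{-1,h}$ is the same duality step the paper performs when it replaces $\|Q^{1/2}A_h^{-1/2}P_h\|$ by $\|A_h^{-1/2}P_hQ^{1/2}\|$.

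One point you gloss over deserves a sentence, because as written the localization does not immediately legitimize the absorption: stopping at $\tau_R=\min\{n: |\Xbe^n|_{-1,h}^2>R\}$ does \emph{not} make $\Phi_{n\wedge\tau_R}$ bounded, since the overshoot $|\Xbe^{\tau_R}|_{-1,h}^2$ exceeds $R$ and has no a priori moment bound — which is precisely the issue at stake. The fix is a two-step use of your own inequalities: the stopped martingale's bracket involves only the indices $j-1\le \tau_R-1$, hence is bounded by the deterministic quantity $K^2T(R\vee L^2)$, so $\EE\sup_{m\le n}|M_{m\wedge\tau_R}|^p<\infty$; then the pathwise energy inequality $\tfrac12\Phi_{n\wedge\tau_R}\le C_1+S_N+\sup_{m\le n}|M_{m\wedge\tau_R}|$ transfers this finiteness to $u_n^R:=\EE\Phi_{n\wedge\tau_R}^p$, and only now is the absorption valid, giving a bound independent of $R$, $h$, $k$. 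Since each $\Xbej(\omega)$ lies in the finite-dimensional space $\dot{S}_h$, one has $\tau_R>N$ almost surely for $R$ large, so $\Phi_{N\wedge\tau_R}\uparrow\Phi_N$ and monotone convergence finishes \eqref{eq:min1}; your derivation of \eqref{eq:min2} from the already-established bounds is then exactly right.
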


\begin{proof}
  Since $\Xbej\in \dot{S}_h$, we may multiply by $A_h^{-1}\Xbej$ in
  \eqref{eq:BE} to get
\begin{align*}
&\frac12\left(|\Xbej|_{-1,h}^2-|\Xbe^{j-1}|_{-1,h}^2+|\Xbej-\Xbe^{j-1}|_{-1,h}^2\right)
+k|\Xbej|^2_{1}+k\langle f(\Xbej),\Xbej\rangle\\
&\qquad=\langle P_h\Wj,A^{-1}_h\Xbej\rangle,
\end{align*}
where we have used the selfadjointness of $A_h$, \eqref{chc2:a1}, and
the identity
\begin{equation}\label{eq:betrick}
\langle X-Y,X\rangle_{-1,h}
=\tfrac12
\big(|X|_{-1,h}^2-|Y|_{-1,h}^2+|X-Y|_{-1,h}^2\big).  
\end{equation}
Next, the dissipativity inequality \eqref{eq:diss1} yields
\begin{equation*}
\begin{aligned}
&\frac12\left(|\Xbej|_{-1,h}^2-|\Xbe^{j-1}|_{-1,h}^2+|\Xbej-\Xbe^{j-1}|_{-1,h}^2\right)+k|\Xbej|^2_1\\
&\qquad\leq
C_0k+\langle P_h \Wj,\Xbej-\Xbe^{j-1}\rangle_{-1,h}+\langle P_h\Wj,\Xbe^{j-1}\rangle_{-1,h}.
\end{aligned}
\end{equation*} 
Furthermore, $\langle P_h \Wj,\Xbej-\Xbe^{j-1}\rangle_{-1,h}\leq
C_{\epsilon}|P_h \Wj|_{-1,h}^2+\epsilon|\Xbej-\Xbe^{j-1}|_{-1,h}^2$. Hence, 
\begin{equation}\label{eq:onestep}
\begin{aligned}
&\frac12\left(|\Xbej|_{-1,h}^2-|\Xbe^{j-1}|_{-1,h}^2\right)
+c|\Xbej-\Xbe^{j-1}|_{-1,h}^2+k|\Xbej|^2_1\\
&\qquad\leq
C\left(k+|P_h\Wj|_{-1,h}^2+\langle P_h\Wj,\Xbe^{j-1}\rangle_{-1,h}\right).
\end{aligned}
\end{equation}
Summing with respect to $j$ in \eqref{eq:onestep}, thus yields
\begin{equation}\label{eq:normsum}
\begin{aligned}
&|\Xbe^n|_{-1,h}^2+\sum_{j=1}^n\left(|\Xbej-\Xbe^{j-1}|_{-1,h}^2+k|\Xbej|^2_1\right)
\leq C\Big(T+ |X^0_h|_{-1,h}^2
\\ & \qquad 
+\sum_{j=1}^n|P_h\Wj|_{-1,h}^2
+\sum_{j=1}^{n}\langle P_h\Wj,\Xbe^{j-1}\rangle_{-1,h}\Big).
\end{aligned}
\end{equation} 
We drop the sum on the left, take the $p$'th power and the supremum
with respect to $n$, and then the expectation, using also
\eqref{eq:psum} repeatedly, to get
\begin{equation}\label{eq:normest}
\begin{aligned}
&\EE \sup_{1 \leq n\leq N}|\Xben|_{-1,h}^{2p}
\leq\EE\sup_{1\leq n\leq N} C\Big\{
T+ |X^0_h|_{-1,h}^2
\\ & \qquad 
+\sum_{j=1}^n|P_h\Wj|_{-1,h}^2 
+\sum_{j=1}^{n}\langle P_h \Wj,\Xbe^{j-1}\rangle_{-1,h}\Big\}^p
\\ & \quad
\leq C \EE\Big\{
T^p+|X^0_h|_{-1,h}^{2p}
\\ & \qquad 
+\Big(\sum_{j=1}^N|P_h\Wj|_{-1,h}^2\Big)^p 
+ \sup_{1\leq n\leq N} \Big|\sum_{j=1}^{n}
\langle P_h\Wj,\Xbe^{j-1}\rangle_{-1,h}\Big|^p
\Big\}
\\ & \quad
\leq C\Big\{  
T^p+|X^0_h|_{-1,h}^{2p}
\\ & \qquad 
+\EE\Big(\sum_{j=1}^N|P_h\Wj|_{-1,h}^2\Big)^p 
+\EE\sup_{1\leq n\leq N} \Big|\sum_{j=1}^{n}\langle P_h
\Wj,\Xbe^{j-1}\rangle_{-1,h}\Big|^p 
\Big\}.
\end{aligned}
\end{equation}

By \eqref{eq:es1a}, we have 
\begin{align}
\label{eq:EW}
\begin{aligned}
&\EE\Big(\sum_{j=1}^N|P_h\Wj|_{-1,h}^2\Big)^p 
\le C N^{p-1}\sum_{j=1}^N \EE|P_h\Wj|_{-1,h}^{2p}
\\ & \quad
\leq C N^{p-1}\sum_{j=1}^N k^p\|A_h^{-1/2}P_hQ^{1/2}\|^{2p}_\HS
\leq C T^p\|A_h^{-1/2}P_hQ^{1/2}\|^{2p}_\HS 
\le C.
\end{aligned}
\end{align}
Moreover, by Cauchy's inequality and \eqref{eq:rbdg}, 
\begin{equation}\label{eq:supest}
\begin{aligned}
&
\Big|\sum_{j=1}^{n}\langle P_h\Wj,\Xbe^{j-1}\rangle_{-1,h}\Big|^p
=\EE\sup_{1\leq n\leq N}\Big|
\sum_{j=1}^{n}\langle \Wj,A_h^{-1}\Xbe^{j-1}\rangle \Big|^p
\\ & \quad
\leq C\EE
\Big(\sum_{j=1}^{N} k\|Q^{1/2}A_h^{-1}\Xbe^{j-1}\|^2\Big)^{p/2}
\\ & \quad
\leq C\Big\{  1+\EE
\Big(\sum_{j=1}^{N} k\|Q^{1/2}A_h^{-1}\Xbe^{j-1}\|^2\Big)^{p}\Big\}
\\ &\quad
\leq C\Big\{1+(Nk)^{p-1}\sum_{j=1}^{N}
k\EE\|Q^{1/2}A_h^{-1}P_h\Xbe^{j-1}\|^{2p} \Big\}
\\ & \quad
\leq C\Big\{1+(Nk)^{p-1}\sum_{j=1}^{N}
k\|Q^{1/2}A_h^{-1/2}P_h\|^{2p}\EE|\Xbe^{j-1}|_{-1,h}^{2p}\Big\}
\\ &\quad
=C\Big\{ 1+(Nk)^{p-1}\sum_{j=1}^{N} k\|A_h^{-1/2}P_hQ^{1/2}\|^{2p}
\EE|\Xbe^{j-1}|_{-1,h}^{2p}\Big\}.
\end{aligned}
\end{equation}
As $Nk=T$,
$\|A_h^{-1/2}P_hQ^{1/2}\|\le \|A_h^{-1/2}P_hQ^{1/2}\|_{\HS}\le K$, and
$|X^0_h|_{-1,h}\le L$, by inserting \eqref{eq:EW} and
\eqref{eq:supest} into \eqref{eq:normest}, we see that
\begin{equation}\label{eq:HmomentGron}
\begin{aligned}
&\EE \sup_{0\leq n\leq N}|\Xben|_{-1,h}^{2p}
\leq C(p,T,K,L)
\Big( 1  +\sum_{j=0}^{N-1} k\EE|\Xbe^{j}|_{-1,h}^{2p} \Big) 
\\ & \quad 
= C(p,T,K,L) 
\Big( 1+\sum_{n=0}^{N-1} k\EE\sup_{0\leq j\leq n}|\Xbej|_{-1,h}^{2p}\Big) .
\end{aligned}
\end{equation}
By induction, \eqref{eq:HmomentGron} shows that the quantity
$\EE\sup_{0\leq j\leq n}|\Xbej|_{-1,h}^{2p}$ is finite for all
$n=1,\dots, N$ and hence the inequality \eqref{eq:min1} follows from
Gronwall's lemma.  Having this result at hand one may return to
\eqref{eq:normsum} to prove \eqref{eq:min2} by a similar procedure but
without the Gronwall argument at the end.
\end{proof}

In the sequel, in many places the quantity
\begin{equation}\label{eq:chem}
Y_h^j:=A_h\Xbej+P_hf(\Xbej)
\end{equation}
plays a crucial role. It can be regarded as the discrete version of
the ``chemical potential'' $Y=AX+f$.  With this notation, the scheme
\eqref{eq:BE} can be rewritten as
\begin{equation}\label{eq:BE1}
\Xbej-\Xbe^{j-1}+kA_hY_h^j
=P_h\Wj,\ j=1,2,\ldots,N; \quad
 X^0_h=P_hX_0.
\end{equation}

We continue to prove a stronger moment bound.  Note that it is not
``closed'', because $Y_h^j$ remains on the right hand side. 

\begin{lem}\label{lem:qest}
  Suppose that $\|A^{1/2}Q^{1/2}\|_{\HS}\le K$ and $|X^0_h|_{-1,h}\le L$ for all $h>0$.
  Then, for every $\epsilon,\delta>0$ and $p\geq 1$, there
  are $C_1>0$ depending on $T,\epsilon,\delta,p,K$ and $L$, and 
  $C_2>0$ depending on $T$ and $p$ such that for all $h,k>0$,
\begin{align}\label{eq:qest}
\begin{aligned}
&\EE\Big(\sup_{1\leq j\leq N}\|\Xbej\|^{2p}\Big)
+\EE\Big(\sum_{j=1}^N\|\Xbej-\Xbe^{j-1}\|^2\Big)^p
\le C_1+C_2\delta\EE \Big(
\sum_{j=1}^N k|Y_h^j|_{1}^2
\Big)^{\frac{1+\epsilon}{2}p}.
\end{aligned}
\end{align}
\end{lem}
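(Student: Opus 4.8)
The plan is to derive an $L_2$ energy identity for the scheme in its chemical-potential form \eqref{eq:BE1} and to treat the single sign-indefinite term by a Cauchy--Schwarz splitting in the time index, feeding the already available moment bound \eqref{eq:min2} of Lemma~\ref{lem:Hmoment} into a weighted Young inequality. First I would test \eqref{eq:BE1} with $\Xbej$ in $H$. Using the polarization identity $\langle u-v,u\rangle=\tfrac12(\|u\|^2-\|v\|^2+\|u-v\|^2)$ and the selfadjointness of $A_h^{1/2}$ (with $Y_h^j,\Xbej\in S_h$, so $\langle A_hY_h^j,\Xbej\rangle=\langle A_h^{1/2}Y_h^j,A_h^{1/2}\Xbej\rangle$), this yields
\begin{align*}
\tfrac12\big(\|\Xbej\|^2-\|\Xbe^{j-1}\|^2+\|\Xbej-\Xbe^{j-1}\|^2\big)+k\langle A_h^{1/2}Y_h^j,A_h^{1/2}\Xbej\rangle=\langle P_h\Wj,\Xbej\rangle.
\end{align*}
Summing over $j=1,\dots,n$, moving the cross term to the right, and taking the supremum over $n$ (both left-hand terms are then dominated by the supremum of the right-hand side), I arrive at
\begin{align*}
\Phi:=\sup_{1\le n\le N}\tfrac12\|\Xben\|^2+\tfrac14\sum_{j=1}^N\|\Xbej-\Xbe^{j-1}\|^2\le\tfrac12\|\Xbe^0\|^2+I+II+III,
\end{align*}
where $III=\sum_{j=1}^Nk|Y_h^j|_1|\Xbej|_1$ collects the cross term (using $|w|_1=\|A_h^{1/2}w\|$ for $w\in S_h$, cf.\ \eqref{chc2:a1}), and $I,II$ are the stochastic contributions treated below.

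The heart of the matter is $III$. By the discrete Cauchy--Schwarz inequality in $j$,
\begin{align*}
III\le\Big(\sum_{j=1}^Nk|Y_h^j|_1^2\Big)^{1/2}\Big(\sum_{j=1}^Nk|\Xbej|_1^2\Big)^{1/2}=:G^{1/2}H^{1/2}.
\end{align*}
Here $G=\sum_jk|Y_h^j|_1^2$ is precisely the quantity on the right of \eqref{eq:qest}, while $H=\sum_jk|\Xbej|_1^2$ has finite moments of every order by \eqref{eq:min2}; its hypotheses hold because $\|A^{1/2}Q^{1/2}\|_{\HS}\le K$ forces $\Tr Q\le\lambda_1^{-1}K^2<\infty$, whence $\|A_h^{-1/2}P_hQ^{1/2}\|_{\HS}\le\lambda_1^{-1}K$ (using $\|A_h^{-1/2}P_h\|\le\lambda_{h,1}^{-1/2}$ and $\lambda_{h,1}\ge\lambda_1$). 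Raising to the power $p$ and applying the weighted Young inequality with conjugate exponents $1+\epsilon$ and $\tfrac{1+\epsilon}{\epsilon}$ to $G^{p/2}\cdot H^{p/2}$ gives
\begin{align*}
\EE\big(G^{p/2}H^{p/2}\big)\le C_2\delta\,\EE\Big(\sum_{j=1}^Nk|Y_h^j|_1^2\Big)^{\frac{1+\epsilon}{2}p}+C_{\delta,\epsilon,p}\,\EE H^{\frac{(1+\epsilon)p}{2\epsilon}},
\end{align*}
the last factor being finite by \eqref{eq:min2}. This explains the exponent $\tfrac{1+\epsilon}{2}p$ in the statement: tuning the Young exponent to $1+\epsilon$ keeps the $H$-moment finite while the $G$-term alone carries the small factor $\delta$. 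This is the main technical point.

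For the stochastic terms I would split $\langle P_h\Wj,\Xbej\rangle=\langle\Wj,\Xbe^{j-1}\rangle+\langle P_h\Wj,\Xbej-\Xbe^{j-1}\rangle$ (using $\Xbej\in S_h$). The increment part is controlled by $\langle P_h\Wj,\Xbej-\Xbe^{j-1}\rangle\le C\|P_h\Wj\|^2+\tfrac14\|\Xbej-\Xbe^{j-1}\|^2$, whose second term is absorbed into the left and whose first term satisfies $\EE(\sum_j\|P_h\Wj\|^2)^p\le CT^p(\Tr Q)^p$ by \eqref{eq:es1b} and \eqref{eq:psum}. The remaining sum $\sum_{j=1}^n\langle\Wj,\Xbe^{j-1}\rangle$ is a discrete martingale with predictable, piecewise-constant integrand $\Xbe^{j-1}$, so \eqref{eq:rbdg} and $\|Q^{1/2}\|\le(\Tr Q)^{1/2}$ give
\begin{align*}
\EE\sup_{1\le n\le N}\Big|\sum_{j=1}^n\langle\Wj,\Xbe^{j-1}\rangle\Big|^p\le C\,\EE\Big(\sum_{j=1}^Nk\|Q^{1/2}\Xbe^{j-1}\|^2\Big)^{p/2}\le CT^{p/2}(\Tr Q)^{p/2}\big(\EE\sup_j\|\Xbej\|^{2p}\big)^{1/2}.
\end{align*}
Finally I would collect everything via $(\sum_i a_i)^p\le 4^{p-1}\sum_ia_i^p$, bound the deterministic initial term $\tfrac12\|\Xbe^0\|^2\le\tfrac12\|X_0\|^2$, and apply Young once more to the martingale estimate so that all terms proportional to $\EE\sup_j\|\Xbej\|^{2p}$ (equivalently to $\EE\Phi^p$, since $\Phi\ge\tfrac12\sup_n\|\Xben\|^2$) appear with coefficient $<1$ and can be absorbed on the left. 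What remains is exactly $C_1+C_2\delta\,\EE(\sum_jk|Y_h^j|_1^2)^{\frac{1+\epsilon}{2}p}$, and since $\Phi^p$ dominates a fixed multiple of $\sup_j\|\Xbej\|^{2p}+(\sum_j\|\Xbej-\Xbe^{j-1}\|^2)^p$, the claim \eqref{eq:qest} follows. I expect the delicate bookkeeping of this absorption---guaranteeing that every term carrying $\EE\sup_j\|\Xbej\|^{2p}$ has a coefficient controllable independently of $\delta$, while only the $G$-term carries $\delta$---to be the only genuinely tricky point beyond the Young-exponent choice in $III$.
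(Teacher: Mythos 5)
Your proposal reproduces the paper's proof in its essential mechanism: the same energy identity obtained by testing \eqref{eq:BE1} with $\Xbej$, and, crucially, the same treatment of the indefinite cross term $\sum_{j}k|Y_h^j|_1|\Xbej|_1$ by discrete Cauchy--Schwarz in $j$ followed by Young's inequality with exponents $1+\epsilon$ and $(1+\epsilon)/\epsilon$, feeding \eqref{eq:min2} of Lemma~\ref{lem:Hmoment}; this is exactly how the paper produces the exponent $\frac{1+\epsilon}{2}p$ and the multiplicative factor $\delta$. Your treatment of the noise-increment term and of $\EE\big(\sum_j\|P_h\Wj\|^2\big)^p$, and your verification that the hypotheses of Lemma~\ref{lem:Hmoment} follow from $\|A^{1/2}Q^{1/2}\|_{\HS}\le K$ (via $\Tr Q\le \lambda_1^{-1}K^2$, which implicitly uses the zero-average assumption $Q^{1/2}=PQ^{1/2}$, and $\lambda_{h,1}\ge\lambda_1$), also parallel the paper, which instead bounds $\|A_h^{-1/2}P_hQ^{1/2}\|_{\HS}\le \|A_h^{-1/2}P_hA^{-1/2}P\|\,\|A^{1/2}Q^{1/2}\|_{\HS}$.

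The one place where you deviate is the martingale term, and there your argument has a gap as written. The paper rewrites $\langle \Wj,\Xbe^{j-1}\rangle=\langle A_h^{1/2}P_h\Wj,A_h^{-1/2}\Xbe^{j-1}\rangle$, applies \eqref{eq:rbdg} with integrand $A_h^{-1/2}\Xbe^{j-1}$, and then invokes \eqref{chc2:a2} together with the already established bound \eqref{eq:min1} on $\EE\sup_j|\Xbej|_{-1,h}^{2p}$; this yields the absolute constant $C\big(1+T^p\|A^{1/2}Q^{1/2}\|^{2p}\big)$ and no absorption is needed. You instead keep the $L_2$ pairing, obtain a bound of the form $C\big(\EE\sup_j\|\Xbej\|^{2p}\big)^{1/2}$, and close by a kick-back. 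But the implication ``$X\le C+\theta X$ with $\theta<1$ implies $X\le C/(1-\theta)$'' is vacuous unless one knows beforehand that $X=\EE\sup_j\|\Xbej\|^{2p}<\infty$, and that finiteness is precisely what the lemma is in the process of establishing (nor is the right-hand side of \eqref{eq:qest} known to be finite at this stage). The gap is patchable: for fixed $h,k$ one has $\|\Xbej\|\le \lambda_{h,N_h}^{1/2}|\Xbej|_{-1,h}$ on the finite-dimensional space $\dot{S}_h$, so \eqref{eq:min1} supplies the needed a priori (non-uniform in $h$) finiteness, after which your absorption argument produces constants that are uniform in $h$ and $k$, with $C_2$ depending only on $T,p$ as required. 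Alternatively, adopt the paper's pairing, which eliminates both the absorption and the need for this extra justification.
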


\begin{proof}
By taking inner products with $\Xbej\in \dot{S}_h$ in \eqref{eq:BE1} we get
\begin{equation*}
\frac12\Big(
\|\Xbej\|^2-\|\Xbe^{j-1}\|^2+\|\Xbej-\Xbe^{j-1}\|^2\Big)
+k\langle Y_h^j,\Xbej\rangle_1
=\langle\Wj,\Xbej\rangle,
\end{equation*}
where we recall that
  $\langle x,y\rangle_1=\langle\nabla x,\nabla y\rangle$.  Summing
with respect to $j$, using analogous arguments as in the previous
proof, thus yields for $1\le n\le N$
\begin{align*}
\|\Xbe^n\|^2+\sum_{j=1}^n\|\Xbej-\Xbe^{j-1}\|^2 
&\le C\Big(
\|\Xbe^0\|^{2}
+\sum_{j=1}^nk\big|\langle Y_h^j,\Xbej\rangle_1\big|
\\ & \quad 
+\sum_{j=1}^n\|\Wj\|^2 
+\sum_{j=1}^n\langle\Wj,\Xbe^{j-1}\rangle 
\Big).
\end{align*}
Therefore, 
\begin{align*}
&\EE\sup_{1\leq j\leq N}\|\Xbe^j\|^{2p}  
+ \EE\Big(\sum_{j=1}^N\|\Xbej-\Xbe^{j-1}\|^2\Big)^p 
\\ & \quad
\leq C\Big\{
\|X_h^0\|^{2p}
+\EE\Big(\sum_{j=1}^Nk\big|\langle Y_h^j,\Xbej\rangle_1\big|\Big)^p
\\ & \qquad
+\EE\Big(\sum_{j=1}^{N}\|\Wj\|^2\Big)^p 
+\EE\sup_{1\leq n\leq N}\Big|
\sum_{j=1}^n\langle A_h^{1/2}P_h\Wj,A_h^{-1/2}\Xbe^{j-1}\rangle
\Big|^p
\Big\}.
\end{align*}
The next to last term can be bounded, similarly to \eqref{eq:EW} in
the previous proof, by $CT^p\|Q^{1/2}\|_{\HS}^{2p}$.  Using the
\eqref{eq:min1} and a calculation similar to \eqref{eq:supest} we
obtain
\begin{align*}
\EE\sup_{1\leq n\leq N}\Big|\sum_{j=1}^n\langle
  A_h^{1/2}P_h\Wj,A_h^{-1/2}\Xbe^{j-1}\rangle\Big|^p
&\leq
C\Big(1+T^p\|A_h^{1/2}P_hQ^{1/2}\|^{2p}\Big)
\\ &
\leq C\Big(1+T^p\|A^{1/2}Q^{1/2}\|^{2p}\Big),
\end{align*}
where we also used \eqref{chc2:a2}.  Finally, 
\begin{align*}
&\EE\Big(\sum_{j=1}^Nk\Big|\langle Y_h^j,\Xbej\rangle_1\Big|\Big)^p\leq
\EE\Big(\sum_{j=1}^Nk |Y_h^j|_1 |\Xbej|_1\Big)^p\\
& \quad 
\leq \EE\Big\{
\Big(\sum_{j=1}^Nk |Y_h^j|^2_1\Big)^{p/2}
\Big(\sum_{j=1}^Nk|\Xbej |^2_1\Big)^{p/2}
\Big\}
\\ & \quad
\leq \EE\Big\{
\delta\Big( \sum_{j=1}^Nk |Y_h^j|^2_1\Big)^{\frac{1+\epsilon}{2}p}
+C_{\epsilon,\delta}\Big(\sum_{j=1}^Nk|\Xbej|^2_1\Big)^{\frac{1+\epsilon}{2\epsilon}p}
\Big\},
\end{align*}
and the proof is complete in view of \eqref{eq:min2} by noting that
\begin{align*}
\|A_h^{-1/2}P_hQ^{1/2}\|_{\HS}&=\|A_h^{-1/2}P_hA^{-1/2}A^{1/2}Q^{1/2}\|_{\HS}\\
&\leq \|A_h^{-1/2}P_hA^{-1/2}P\| \,\|A^{1/2}Q^{1/2}\|_{\HS}\leq C \|A^{1/2}Q^{1/2}\|_{\HS}.
\end{align*}
\end{proof}

We next prove the main stability result of the paper.

It is well known that,
for the deterministic Cahn--Hilliard equation, 
\begin{align*}
\dot{u}+Av=0, \ t>0; \quad v=Au+f(u),  
\end{align*}
the functional 
\begin{align*}
J(u):=\tfrac12|u|_1^2+\cF(u), \quad\text{where $\cF(u)=\int_{\cD}F(u)\,\dd x$,}
\end{align*}
is a Ljapunov functional, that is, $J(u(t))\le J(u_0)$, $t\ge0$.  This
leads to a uniform bound for
$\snorm[1]{u(t)}^2+\norm[L_4]{u(t)}^4$. The proof proceeds by
multiplication of the equation by the chemical potential $v$ and
noting that $J'(u)=Au+f(u)=v$ and
$\inner{\dot{u}}{v}=\inner{\dot{u}}{J'(u)}=\frac{\dd}{\dd t}J(u)$.
Thus, $\frac{\dd}{\dd t}J(u)+\snorm[1]{v}^2=0$ and 
\begin{align*}
  J(u(T))+\int_0^T\snorm[1]{v(t)}^2\,\dd t=J(u_0), 
\end{align*}
which is the desired result.

This was imitated for the spatially semidiscrete Cahn--Hilliard--Cook
equation in \cite{KLM} by applying the It\^o formula to $J(X_h(t))$.
For the fully discrete equation \eqref{eq:BE} we do not have an It\^o
formula, so we must use a more direct imitation of the above
calculation in the proof of the following theorem, which contains our
main moment bounds.  

In the proof we denote by $P_\alpha(x)$,
$x=(x_1,...,x_m) $, $x_i\geq 0$, any nonnegative quantity such that
\begin{align}
  \label{eq:palfa}
P_\alpha(x)\leq C\Big(1+\sum_{i=1}^mx_i^{\alpha}\Big).  
\end{align}

\begin{thm} \label{thm:H1moment}
Let $p\ge 1$. If $\HSAQ\le K$ and 
\begin{equation}\label{eq:maincond}
|X_h^0|_{-1,h}
+J(X_h^0)+|Y_h^0|_1
\le L\text{ for all }h>0,
\end{equation}
then there exist $C,k_0>0$, depending on $p$, $K$, $L$, and $T$, such
that for all $h>0$ and $0<k<k_0$,
\begin{align*}
\EE\sup_{1\leq j\leq N}J(\Xbej)^{p}
+\EE\Big(\sum_{j=1}^Nk| Y_h^j|_1^2\Big)^p\leq C.
 \end{align*}
\end{thm}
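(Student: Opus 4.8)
The plan is to reproduce, at the fully discrete level, the Ljapunov computation recalled before the theorem. Since $\Xbej\in\dot S_h$, I would take the $H$-inner product of \eqref{eq:BE1} with the discrete chemical potential $Y_h^j$, which plays the role of $J'(\Xbej)$ on $\dot S_h$. The elliptic term gives $k|Y_h^j|_1^2$, the right-hand side gives $\langle\Wj,Y_h^j\rangle$, and for $\langle\Xbej-\Xbe^{j-1},Y_h^j\rangle$ I split $Y_h^j=A_h\Xbej+P_hf(\Xbej)$: the linear part produces $\tfrac12(|\Xbej|_1^2-|\Xbe^{j-1}|_1^2+|\Xbej-\Xbe^{j-1}|_1^2)$ by the identity behind \eqref{eq:betrick} (now in the $|\cdot|_1$ inner product), while integrating the one-sided Taylor bound \eqref{eq:taylorf} over $\cD$ bounds the nonlinear part below by $\cF(\Xbej)-\cF(\Xbe^{j-1})-\tfrac12\betac^2\|\Xbej-\Xbe^{j-1}\|^2$. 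This yields the discrete Ljapunov inequality
\begin{equation*}
J(\Xbej)-J(\Xbe^{j-1})+k|Y_h^j|_1^2+\tfrac12|\Xbej-\Xbe^{j-1}|_1^2\le\langle\Wj,Y_h^j\rangle+\tfrac12\betac^2\|\Xbej-\Xbe^{j-1}\|^2,
\end{equation*}
and summing over $j$ (using $J(\Xbe^0)\le L$ from \eqref{eq:maincond}) bounds $J(\Xbe^n)+\sum_{j=1}^nk|Y_h^j|_1^2+\tfrac12\sum_{j=1}^n|\Xbej-\Xbe^{j-1}|_1^2$ by $L$ plus the two terms $\sum_j\langle\Wj,Y_h^j\rangle$ and $\tfrac12\betac^2\sum_j\|\Xbej-\Xbe^{j-1}\|^2$.

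The deterministic increment sum is harmless: raising the summed inequality to the $p$-th power and taking expectations, Lemma~\ref{lem:qest} bounds $\EE(\sum_j\|\Xbej-\Xbe^{j-1}\|^2)^p$ by $C_1+C_2\delta\,\EE(\sum_jk|Y_h^j|_1^2)^{(1+\epsilon)p/2}$, and since $(1+\epsilon)/2<1$ for $\epsilon<1$ Young's inequality absorbs the second piece into $\EE(\sum_jk|Y_h^j|_1^2)^p$ on the left (with $\delta$ small). For the stochastic term I would exploit that $Y_h^j$ is \emph{not} predictable and write $Y_h^j=Y_h^{j-1}+(Y_h^j-Y_h^{j-1})$. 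The predictable part $\sum_{j=1}^n\langle\Wj,Y_h^{j-1}\rangle$ is a martingale in $n$, so \eqref{eq:rbdg} controls its $p$-th maximal moment by $C\,\EE(\sum_jk\|Q^{1/2}PY_h^{j-1}\|^2)^{p/2}$; since $\HSAQ\le K$ gives $\|Q^{1/2}Pv\|\le C|v|_1$ (via $\|Q^{1/2}A^{1/2}\|\le K$ and Poincar\'e), this is $\le C\,\EE(\sum_jk|Y_h^{j-1}|_1^2)^{p/2}$, again absorbed by Young, the $j=0$ term being $\le kL^2$ by \eqref{eq:maincond}.

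The increment part is the core. Writing $Y_h^j-Y_h^{j-1}=A_h(\Xbej-\Xbe^{j-1})+P_h(f(\Xbej)-f(\Xbe^{j-1}))$, the linear piece equals $\langle A_h^{1/2}P_h\Wj,A_h^{1/2}(\Xbej-\Xbe^{j-1})\rangle$ and splits by Young into $\tfrac14|\Xbej-\Xbe^{j-1}|_1^2$, absorbed into the left, plus $C\|A_h^{1/2}P_h\Wj\|^2$, whose sum has bounded $p$-th moment by \eqref{eq:es1a} and \eqref{chc2:a2}. For the nonlinear piece I would use the local Lipschitz bound \eqref{eq:loclip}, H\"older ($L_6$–$L_{6/5}$), the $H^1$-stability of $P_h$, and the Sobolev embedding $\dH^1\hookrightarrow L_6$ ($d\le3$) to obtain
\begin{equation*}
|\langle\Wj,P_h(f(\Xbej)-f(\Xbe^{j-1}))\rangle|\le C\norm[L_6]{\Wj}\,(1+J(\Xbej)+J(\Xbe^{j-1}))\,\|\Xbej-\Xbe^{j-1}\|,
\end{equation*}
where $\norm[L_6]{v}^2\le C|v|_1^2\le C(1+J(v))$ keeps the $J$-dependence \emph{linear}. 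A balanced Young splits this into $\tfrac12(1+J(\Xbej)+J(\Xbe^{j-1}))\|\Xbej-\Xbe^{j-1}\|^2$ and $\tfrac12\norm[L_6]{\Wj}^2(1+J(\Xbej)+J(\Xbe^{j-1}))$. Replacing the current index $J(\Xbej)$ by $J(\Xbe^{j-1})$ plus the one-step increment controlled above, the coefficients multiplying $J$ become either predictable or higher order; then $\EE\norm[L_6]{\Wj}^2\sim k$ and the summably small $\sum_j\|\Xbej-\Xbe^{j-1}\|^2$ (Lemma~\ref{lem:qest}) turn these into genuine $k$-weighted Gronwall sums $C\sum_mk\,\EE\sup_{i\le m}J(\Xbe^i)^p$. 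Collecting all small-coefficient copies of $\EE\sup_nJ(\Xbe^n)^p$ and $\EE(\sum_jk|Y_h^j|_1^2)^p$ on the right, the discrete Gronwall lemma closes the estimate for $k<k_0$.

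The main obstacle is precisely this nonlinear stochastic increment $\sum_j\langle\Wj,P_h(f(\Xbej)-f(\Xbe^{j-1}))\rangle$: its non-predictability forbids a direct martingale estimate, and the natural bound couples the current-step energy $J(\Xbej)$ to the current noise increment $\Wj$. The whole difficulty is to arrange every Young splitting so that $J$ appears only to the first power and so that each coefficient is either predictable (decoupling from $\Wj$, so that $\EE\norm{\Wj}^2\sim k$ provides the Gronwall weight) or absorbable into the dissipation $\tfrac12\sum_j|\Xbej-\Xbe^{j-1}|_1^2$ and into the non-closed moment bound of Lemma~\ref{lem:qest}; any cruder estimate (e.g.\ Cauchy--Schwarz pulling out $\sup_nJ(\Xbe^n)$) produces the unclosable higher moment $\EE(\sup_nJ(\Xbe^n))^{2p}$.
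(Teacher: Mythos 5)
Your opening matches the paper exactly: multiplication of \eqref{eq:BE1} by $Y_h^j$, the discrete Ljapunov inequality, the splitting $\langle Y_h^j,P_h\Wj\rangle=\langle Y_h^{j-1},P_h\Wj\rangle+\langle Y_h^j-Y_h^{j-1},P_h\Wj\rangle$, the martingale estimate for the predictable part, and absorption of $\sum_j\|\Xbej-\Xbe^{j-1}\|^2$ via Lemma~\ref{lem:qest}. The gap is in the nonlinear stochastic increment, and it is fatal as written. Your H\"older splitting (noise in $L_6$, state in $L_3$ bounded via $L_6$ and Sobolev, increment in $L_2$) followed by Young produces the two sums $\sum_j(1+J(\Xbej)+J(\Xbe^{j-1}))\|\Xbej-\Xbe^{j-1}\|^2$ and $\sum_j\norm[L_6]{\Wj}^2(1+J(\Xbej)+J(\Xbe^{j-1}))$, in both of which the \emph{current-index} $J(\Xbej)$ is multiplied by a coefficient that is neither predictable nor pointwise $O(k)$. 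Because your bound is exactly linear in $J$, there is no exponent slack left: the natural estimate that needs no adaptedness, H\"older in $\omega$, gives $\EE\big[\norm[L_6]{\Wj}^{2p}J(\Xbej)^p\big]\le\big(\EE\norm[L_6]{\Wj}^{2pq'}\big)^{1/q'}\big(\EE J(\Xbej)^{pq}\big)^{1/q}$ with some $q>1$, i.e.\ a moment of order $pq>p$ that the Gronwall argument cannot close; pulling out $\sup_jJ(\Xbej)$ instead gives $\EE\sup_jJ^{2p}$, which you yourself identify as unclosable. Your proposed repair --- writing $J(\Xbej)=J(\Xbe^{j-1})+\big(J(\Xbej)-J(\Xbe^{j-1})\big)$ and inserting the one-step Ljapunov bound --- does not terminate: that bound itself contains $\langle Y_h^j-Y_h^{j-1},P_h\Wj\rangle$ and hence again $\norm[L_6]{\Wj}(1+J(\Xbej)+\cdots)\|\Xbej-\Xbe^{j-1}\|$, so every substitution reproduces a current-index $J(\Xbej)$ with a non-predictable coefficient (only with higher noise powers attached), and the promise that the coefficients become ``either predictable or higher order'' is never realized.

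The paper escapes this trap by never letting the increment appear in $L_2$ with a state-dependent weight. It estimates $|\langle f(\Xbej)-f(\Xbe^{j-1}),P_h\Wj\rangle|\le\|A_h^{-1/2}P_hP(f(\Xbej)-f(\Xbe^{j-1}))\|\,|P_h\Wj|_1$ and bounds the dual norm by $C\|\Xbej-\Xbe^{j-1}\|_{L_6}\big(1+\|\Xbej\|_{L_3}^2+\|\Xbe^{j-1}\|_{L_3}^2\big)$, so the increment sits in $H^1$ and is wholly absorbed by the dissipation $\sum_j|\Xbej-\Xbe^{j-1}|_1^2$, while the state factor sits in $L_3$ and is interpolated, $\|\cdot\|_{L_3}^2\le\|\cdot\|_{L_2}^{2/3}\|\cdot\|_{L_4}^{4/3}$. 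After Young, the noise term then carries the state only through powers $4/3$ in $L_2$ and $8/3$ in $L_4$; the point the paper stresses is that $8/3$ is \emph{strictly less} than $4$ (``we cannot simplify by multiplying the polynomials together''). That slack of a factor $3/2$ is exactly what is spent on H\"older in $\omega$, which requires no predictability: with $q_1q_2=3/2$ and $q_2'=3+1/p^2$ the $L_4$-exponent lands at precisely $4p$, i.e.\ $\cF(\Xbej)^p\le J(\Xbej)^p$, an admissible Gronwall weight, while the arbitrarily large $L_2$-powers are soaked up through Lemmas~\ref{lem:Hmoment} and~\ref{lem:qest}. Your Sobolev step $\|1+\Xbej^2+(\Xbe^{j-1})^2\|_{L_3}\le C(1+|\Xbej|_1^2+|\Xbe^{j-1}|_1^2)$ discards this slack at the very first move, and without it the estimate cannot be closed; repairing your argument essentially forces you back to the paper's interpolation and exponent bookkeeping.
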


\begin{proof}
  Following the procedure from the deterministic case, we multiply
  \eqref{eq:BE1} by the discrete chemical potential
  $Y_h^j=A_h\Xbej+P_hf(\Xbej)$:
\begin{align*}
\inner{Y_h^j}{\Delta \Xbej}
+k \snorm[1]{Y_h^j}^1 
=\inner{Y_h^j}{P_h\Wj}, 
\end{align*}
where $\Delta \Xbej=\Xbej-\Xbe^{j-1}$.  
From \eqref{eq:taylorf} it follows that, for $X,Z\in \dot{H}$, 
\begin{align*}
\cF(X)-\cF(Z) 
\leq \langle f(X),X-Z\rangle+\tfrac12\betac^2\|X-Z\|^2.
\end{align*} 
Hence, 
\begin{align*}
  \langle P_hf(\Xbej),\Delta \Xbej\rangle 
=  \langle f(\Xbej),\Delta \Xbej\rangle 
\ge \cF(\Xbej)-\cF(\Xbe^{j-1}) 
-\tfrac12\betac^2\|\Delta \Xbej\|^2.
\end{align*}
As in \eqref{eq:betrick} we have 
\begin{align*}
\inner{A_h\Xbej}{\Delta \Xbej}
=\inner[1]{\Xbej}{\Delta \Xbej}
=\frac12 \big(|\Xbej|^2_1-|\Xbe^{j-1}|^2_1
+ |\Delta \Xbej|_1^2\big).
\end{align*}
By adding the latter two relations, we obtain
\begin{align*}
  \inner{Y_h^j}{\Delta \Xbej}
&=\inner{A_h\Xbej+P_hf(\Xbej)}{\Delta \Xbej}
\\ &
\ge 
J(\Xbej)-J(\Xbe^{j-1})+\frac12 |\Delta \Xbej|_1^2
+\tfrac12\betac^2\|\Delta \Xbej\|^2.
\end{align*}
This is the discrete analog of 
$\inner{v}{\dot{u}}=\frac{\dd}{\dd t}J(u)$. 
By \eqref{eq:BE1}, we now have 
\begin{align}\label{eq:Lyapunovest}
J(\Xbej)-J(\Xbe^{j-1})+\frac12 |\Delta \Xbej|_1^2
+k \snorm[1]{Y_h^j}^1 
\le 
\inner{Y_h^j}{P_h\Wj}
-\tfrac12\betac^2\|\Delta \Xbej\|^2.
\end{align}
The remaining challenge is to deal with the term 
\begin{align*}
\inner{Y_h^j}{P_h\Wj}
=
\inner{Y_h^{j-1}}{P_h\Wj}
+\inner{Y_h^j-Y_h^{j-1}}{P_h\Wj}. 
\end{align*}
We begin by 
\begin{align*}
  \inner{Y_h^j-Y_h^{j-1}}{P_h\Wj}
=
\inner{A_h\Delta X_h^j}{P_h\Wj}
+  \inner{f(X_h^j)-f(X_h^{j-1})}{P_h\Wj}.  
\end{align*}
Here, by \eqref{chc2:a2},
\begin{align} \label{eq:Adiffest}
\langle A_h\Delta \Xbej,P_h\Wj\rangle
\leq
\epsilon|\Delta \Xbej|_{1}^2
+C_\epsilon|\Wj|_{1}^2
\end{align}
and, as $P_h\Wj\in \dot{S}_h\subset \dot{H} $,
\begin{equation}\label{eq:Fdiffest}
\begin{aligned}
&|\langle f(\Xbej)-f(\Xbe^{j-1}),P_h\Wj\rangle|
=|\langle P(f(\Xbej)-f(\Xbe^{j-1})),P_h\Wj\rangle|
\\ &  \qquad
=|\langle A_h^{-1/2}P_hP(f(\Xbej)-f(\Xbe^{j-1})),
A_h^{1/2}P_h\Wj\rangle|
\\ & \qquad
\leq \|A_h^{-1/2}P_hP(f(\Xbej)-f(\Xbe^{j-1}))\|\, |P_h\Wj|_{1}
\\ & \qquad
\leq C\|A_h^{-1/2}P_hP(f(\Xbej)-f(\Xbe^{j-1}))\|\,|\Wj|_{1}.
\end{aligned}
\end{equation}
By using H\"older's and Sobolev's inequalities ($d\le3$) we show
\begin{align*}
  \norm{A_h^{-1/2}P_hPf}=\sup_{v_h\in\dot{S}_h}\frac{\inner{f}{v_h}}{\snorm[1]{v_h}}
\le \sup_{v_h\in\dot{S}_h}\frac{\norm[L_{6/5}(\cD)]{f}\norm[L_{6}(\cD)]{v_h}}{\snorm[1]{v_h}}
\le C \norm[L_{6/5}(\cD)]{f}.
\end{align*}
Therefore, \eqref{eq:loclip} implies
\begin{equation}
\begin{aligned}\label{eq:AmFdiff}
&\|A_h^{-1/2}P_hP(f(\Xbej)-f(\Xbe^{j-1})\|
\leq C \|f(\Xbej)-f(\Xbe^{j-1})\|_{L_{6/5}(\cD)}\\
& \qquad 
\leq
C\left(\int_\cD|\Xbej-\Xbe^{j-1}|^{6/5}(1+(\Xbej)^2+(\Xbe^{j-1})^2)^{6/5}\,\dd
  \xi\right)^{5/6}\\
& \qquad \leq C\left(\int_\cD|\Xbej-\Xbe^{j-1}|^{6}\,\dd \xi\right)^{1/6}
\left(\int_\cD(1+(\Xbej)^2+(\Xbe^{j-1})^2)^{3/2}\,\dd \xi\right)^{2/3}\\
& \qquad \leq C\|\Xbej-\Xbe^{j-1}\|_{L_6(\cD)}
\left(1+\|\Xbej\|^2_{L_3(\cD)}+\|\Xbe^{j-1}\|^2_{L_3(\cD)}\right).
\end{aligned}
\end{equation}
Further, with $p<q< r$ and $\lambda =\tfrac pq\tfrac{r-q}{r-p}$, we have
\begin{align*}
\|X\|_{L_{q}(\cD)}\leq \|X\|_{L_{p}(\cD)}^{\lambda}\|X\|_{L_{r}(\cD)}^{1-\lambda},
\end{align*} 
see \cite[Proposition~6.10]{Folland}. 
We take $p=2$, $q=3$, $r=4$, and hence $\lambda=\tfrac13$, to
conclude that
\begin{equation}\label{eq:Finterp}
\|\Xbej\|^2_{L_3(\cD)}\leq \|\Xbej\|^{2/3}_{L_2(\cD)}\|\Xbej\|^{4/3}_{L_{4}(\cD)}.
\end{equation}
Thus, from \eqref{eq:Fdiffest}, \eqref{eq:AmFdiff},
\eqref{eq:Finterp}, and since by Sobolev's inequality we have
$\|\Xbej-\Xbe^{j-1}\|_{L_6(\cD)}\leq C|\Xbej-\Xbe^{j-1}|_{1}$, it
follows that
\begin{equation}\label{eq:AFfinal}
\begin{aligned}
&|\langle f(\Xbej)-f(\Xbe^{j-1}),P_h\Wj\rangle|
\leq
C|\Wj|_1|\Xbej-\Xbe^{j-1}|_{1}
\\ &\qquad
\times
P_{2/3}(\|\Xbej\|_{L_2(\cD)},\|\Xbe^{j-1}\|_{L_2(\cD)})
P_{4/3}(\|\Xbej\|_{L_{4}(\cD)},\|\Xbe^{j-1}\|_{L_{4}(\cD)})
\\ & \quad
\leq \epsilon|\Xbej-\Xbe^{j-1}|_{1}^2
+C|\Wj|_1^2P_{4/3}(\|\Xbej\|_{L_2(\cD)},\|\Xbe^{j-1}\|_{L_2(\cD)})\\
&\qquad 
\times P_{8/3}(\|\Xbej\|_{L_{4}(\cD)},\|\Xbe^{j-1}\|_{L_{4}(\cD)}), 
\end{aligned}
\end{equation}
where we used the notation $P_\alpha$ from \eqref{eq:palfa}.
This means that we have bounded the Lipschitz constant in
\eqref{eq:AmFdiff} by powers of $\norm[L_q(\cD)]{\Xbej}$, $q=2,4$,
which we shall be able to control.  It will be important that the
exponent on $\norm[L_4(\cD)]{\Xbej}$, is strictly less than $4$ so
that it can be controled in terms of $\cF(\Xbej)$.  Therefore we
cannot simplify by multiplying the polynomials together.  The exponent
on $\norm[L_2(\cD)]{\Xbej}$ can be arbitrarily large because of
Lemma~\ref{lem:qest}.

Thus, with $0<\epsilon <\tfrac14$ we get, after inserting
\eqref{eq:Adiffest} and \eqref{eq:AFfinal} into \eqref{eq:Lyapunovest}
and rearranging, that
\begin{align*}
\begin{aligned}
&J(\Xbej)-J(\Xbe^{j-1})+c|\Xbej-\Xbe^{j-1}|_1^2+k|Y_h^j|_1^2\\
&\quad\leq C|\Wj|_1^2P_{4/3}(\|\Xbej\|_{L_2(\cD)},\|\Xbe^{j-1}\|_{L_2(\cD)})
 P_{8/3}(\|\Xbej\|_{L_{4}(\cD)},\|\Xbe^{j-1}\|_{L_{4}(\cD)})
\\ &\qquad\qquad +\langle Y_h^{j-1},\Wj\rangle+C\|\Xbej-\Xbe^{j-1} \|^2.
\end{aligned}
\end{align*}
Summing with respect to $j$ then yields 
\begin{align*}
\begin{aligned}
& J(\Xbe^n)
+\sum_{j=1}^n\Big(|\Xbej-\Xbe^{j-1}|_1^2+k| Y_h^j|_1^2\Big)
\\ &\quad
\leq
C\Big(J(X_h^0)
+\sum_{j=1}^n\|\Xbej-\Xbe^{j-1}\|^2
\\ &\quad\quad 
+\sum_{j=1}^n|\Wj|_1^2
P_{4/3}(\|\Xbej\|_{L_2(\cD)},\|\Xbe^{j-1}\|_{L_2(\cD)})
P_{8/3}(\|\Xbej\|_{L_{4}(\cD)},\|\Xbe^{j-1}\|_{L_{4}(\cD)})
\\ &\quad\quad
+\sum_{j=1}^n\langle Y_h^{j-1},\Wj\rangle \Big).
\end{aligned}
\end{align*}
It follows in a similar way as in the proof of
Lemma~\ref{lem:Hmoment}, using also \eqref{eq:qest} with $\epsilon=1$
and $\delta>0$ so small that the third term on the right hand side
above can be absorbed into the third term in the left hand side below,
that
\begin{equation}\label{eq:anzats}
\begin{aligned}
&\EE\sup_{1\leq n\leq N}J(\Xbe^n)^p 
+\EE\Big(\sum_{j=1}^Nk| Y_h^j|_1^2\Big)^p
\leq C\Big\{1+J(X_h^0)^p
\\ & \qquad 
+N^{p-1}\EE\sum_{j=1}^N
\Big( |\Wj|_1^{2p}
P_{4p/3}(\|\Xbej\|_{L_2(\cD)},\|\Xbe^{j-1}\|_{L_2(\cD)})
\\ & \qquad \quad
\times P_{8p/3}(\|\Xbej\|_{L_{4}(\cD)},\|\Xbe^{j-1}\|_{L_{4}(\cD)}) 
\Big)
+\EE\sup_{1\leq n\leq N}\Big|\sum_{j=1}^n
\langle Y_h^{j-1},\Wj\rangle
\Big|^p 
\Big\}.
\end{aligned}
\end{equation}
The first three terms to the right of the inequality are bounded by
assumption.  For the fourth term we use that
$\EE|\Wj|_1^{2p}\leq C k^p\|A^{1/2}Q^{1/2}\|^{2p}_\HS$, see
\eqref{eq:EW}, together with H\"older's inequality  with
conjugate exponents $q_1,q_1'>1$, to get
\begin{align*}
\begin{aligned}
&N^{p-1}\EE\sum_{j=1}^N\big( |\Wj|_1^{2p}
P_{4p/3}(\|\Xbej\|_{L_2(\cD)},\|\Xbe^{j-1}\|_{L_2(\cD)})
\\ & \qquad\qquad \qquad\qquad
\times P_{8p/3}(\|\Xbej\|_{L_{4}(\cD)},\|\Xbe^{j-1}\|_{L_{4}(\cD)})\big)
\\ & \quad
\leq N^{p-1}\sum_{j=1}^N \big(\EE|\Wj|_1^{2pq_1'} \big)^{1/q_1'}
\Big[\EE\big(P_{4pq_1/3}(\|\Xbej\|_{L_2(\cD)},\|\Xbe^{j-1}\|_{L_2(\cD)})
\\ & \qquad\qquad \qquad\qquad
\times P_{8pq_1/3}(\|\Xbej\|_{L_{4}(\cD)},
\|\Xbe^{j-1}\|_{L_{4}(\cD)})\big)\Big]^{1/q_1}
\\ & \quad
\le  C\HSAQ^{2p} k^{p-1}N^{p-1} \sum_{j=1}^N k
\Big[\EE\big(P_{4pq_1/3}(\|\Xbej\|_{L_2(\cD)},\|\Xbe^{j-1}\|_{L_2(\cD)})
\\ &\qquad\qquad \qquad\qquad
\times P_{8pq_1/3}(\|\Xbej\|_{L_{4}(\cD)},
\|\Xbe^{j-1}\|_{L_{4}(\cD)})\big)\Big]^{1/q_1}.
\end{aligned}
\end{align*}
Here $\HSAQ^{2p}\le C$. H\"older's inequality, now with $q_2,q_2'>1$,
bounds the above quantity by 
\begin{equation}\label{eq:diffisum}
\begin{aligned}
&\leq CT^{p-1}\sum_{j=1}^N k\Big[
\Big(\EE\big(P_{4pq_1q_2'/3}(\|\Xbej\|_{L_2(\cD)},
\|\Xbe^{j-1}\|_{L_2(\cD)})\big)\Big)^{1/q_2'}
\\ &\qquad\qquad \qquad\qquad
\times\Big(\EE\big(P_{8pq_1q_2/3}(\|\Xbej\|_{L_{4}(\cD)}, 
\|\Xbe^{j-1}\|_{L_{4}(\cD)})\big)\Big)^{1/q_2}\Big]^{1/q_1}
\\ &
\leq C\Big\{1+\sum_{j=1}^Nk\Big(
\EE\big(P_{4pq_1q_2'/3}(\|\Xbej\|_{L_2(\cD)},
\|\Xbe^{j-1}\|_{L_2(\cD)})\big)\Big)^{1/q_2'}
\\ & \qquad\qquad \qquad\qquad
\times\Big(\EE\big(P_{8pq_1q_2/3}(\|\Xbej\|_{L_{4}(\cD)}, 
\|\Xbe^{j-1}\|_{L_{4}(\cD)})\big)\Big)^{1/q_2}\Big\}
\\ &
\leq C\Big\{1+\sum_{j=1}^Nk\Big[
\EE\big(P_{4pq_1q_2'/3}(\|\Xbej\|_{L_2(\cD)},
\|\Xbe^{j-1}\|_{L_2(\cD)})\big)
\\ & \qquad\qquad \qquad\qquad
+\EE\big(P_{8pq_1q_2/3}(\|\Xbej\|_{L_{4}(\cD)}, 
\|\Xbe^{j-1}\|_{L_{4}(\cD)})\big)\Big]\Big\}.
\end{aligned}
\end{equation}
With $q_1q_2=3/2$ we use \eqref{eq:F} to get
\begin{align}\label{eq:PF}
\begin{aligned}
P_{8pq_1q_2/3}(\|\Xbej\|_{L_{4}(\cD)}, \|\Xbe^{j-1}\|_{L_{4}(\cD)})
&
\le C(\|\Xbej\|_{L_{4}(\cD)}^{4p}+\|\Xbe^{j-1}\|_{L_{4}(\cD)}^{4p}+1)
\\ &
\leq C(\cF(\Xbej)^{p}+\cF(\Xbe^{j-1})^{p}+1).
\end{aligned}
\end{align}
Furthermore,
\begin{equation}
\begin{aligned}\label{eq:PP}
&\sum_{j=1}^Nk\Big[\EE\big(P_{4pq_1q_2'/3}(\|\Xbej\|_{L_2(\cD)},\|\Xbe^{j-1}\|_{L_2(\cD)})\\ 
&\qquad 
\leq C\EE k \sum_{j=1}^N
(\|\Xbej\|_{L_2(\cD)}^{4pq_1q_2'/3}+\|\Xbe^{j-1}\|_{L_2(\cD)}^{4pq_1q_2'/3}+1)\\ 
& \qquad
=CT+ C\|\Xbe^{0}\|_{L_2(\cD)}^{q}+C\EE k \sum_{j=1}^N \|\Xbej\|_{L_2(\cD)}^{q},
\end{aligned}
\end{equation}
where for brevity $q=4pq_1q_2'/3$.  Let $t,s>1$ be conjugate
exponents. Then, by H\"older's and Young's inequalities,
\begin{equation*}
\begin{aligned}
&
\EE k
\sum_{j=1}^N \|\Xbej\|_{L_2(\cD)}^{q}
\leq \EE \left[\sup_{1\leq j \leq N}\left(
    \|\Xbej\|_{L_2(\cD)}^{q-2}\right)k \sum_{j=1}^N
  \|\Xbej\|_{L_2(\cD)}^{2}\right]
\\
& \quad 
\leq \EE \left[\sup_{1\leq j \leq N}
  \|\Xbej\|_{L_2(\cD)}^{t(q-2)}\right]^{1/t}
\,\left[\EE \left(k \sum_{j=1}^N \|\Xbej\|_{L_2(\cD)}^{2}\right)^s\right]^{1/s}\\
&\quad 
\leq C\left( \EE \left[\sup_{1\leq j \leq N}
    \|\Xbej\|_{L_2(\cD)}^{t(q-2)}\right]
+\EE \left(k \sum_{j=1}^N \|\Xbej\|_{L_2(\cD)}^{2}\right)^s\right)\\
&\quad \leq C\left( \EE \left[\sup_{1\leq j \leq N}
    \|\Xbej\|_{L_2(\cD)}^{t(q-2)}\right]+\EE \left(k \sum_{j=1}^N
    |\Xbej|_1^{2}\right)^s\right). 
\end{aligned}
\end{equation*}
Next, \eqref{eq:qest} from Lemma \ref{lem:qest} and \eqref{eq:min2}
from Lemma \ref{lem:Hmoment} implies that
\begin{equation}\label{eq:sumxest}
\EE k \sum_{j=1}^N \|\Xbej\|_{L_2(\cD)}^{q}\leq C
+K\delta \EE \left(\sum_{j=1}^Nk|Y_h^j|_{1}^2\right)^{\frac{t(q-2)(1+\epsilon)}{4}}.
\end{equation}
We will find $\epsilon>0$, $q_1,q_2, q_2', t>1$, such that $q_2, q_2'$
are conjugate, $q_1q_2=3/2$ and
\begin{equation}\label{eq:condition}
\frac{t(4pq_1q_2'/3-2)(1+\epsilon)}{4}\leq p.
\end{equation}
Since $q_2, q_2'$ are conjugate and $q_1q_2=3/2$ we have that
$q_1q_2'=3/2(q_2'-1)$ and hence \eqref{eq:condition} becomes
\begin{align*}
tp(q_2'-1)\frac{1+\epsilon}{2}-\frac{1+\epsilon}{2}\leq p  
\end{align*}

Note that $q_2<3/2$ and hence $q_2'>3$. If we set
$q_2'=3+\frac{1}{p^2}$ (and thus $q_2=\frac{3p^2+1}{2p^2+1}$ and
$q_1=\frac{6p^2+3}{6p^2+2}$) we need to find $\epsilon>0$ and $t>1$
such that
\begin{equation}\label{eq:tp}
tp(1+\epsilon)+t\frac{1+\epsilon}{2p}-\frac{1+\epsilon}{2}\leq p.
\end{equation}
But
\begin{align*}
tp(1+\epsilon)+t\frac{1+\epsilon}{2p}-\frac{1+\epsilon}{2}\to 
p+\frac{1}{2p}-\frac12<p 
\end{align*}
as $\epsilon\to 0+$ and $t\to 1+$ and hence there is $\epsilon>0$ and
$t>1$ such that \eqref{eq:tp} and hence \eqref{eq:condition}
holds. Therefore, we can conclude from \eqref{eq:PP} and
\eqref{eq:sumxest} that there is $\epsilon>0$, $q_1,q_2, q_2', t>1$,
such that $q_2, q_2'$ are conjugate, $q_1q_2=3/2$ and
\begin{equation}\label{eq:yay}
\begin{aligned}
&\sum_{j=1}^Nk\Big[\EE\big(P_{4pq_1q_2'/3}(\|\Xbej\|_{L_2(\cD)},\|\Xbe^{j-1}\|_{L_2(\cD)})\leq
CT+ C\|\Xbe^{0}\|_{L_2(\cD)}^{4p+\frac{2}{p^2}}\\ 
&\quad+K\delta\left(1+\EE \left(\sum_{j=1}^Nk|Y_h^j|_{1}^2\right)^p\right)
\end{aligned}
\end{equation}
Thus inserting \eqref{eq:PF} and \eqref{eq:yay} into
\eqref{eq:diffisum} we get, with $C=C(T,p)>0$ and $K=K(T,p)>0$ independent of $\delta>0$,
\begin{equation}
\begin{aligned}\label{eq:term4}
&N^{p-1}\EE\sum_{j=1}^N |\Wj|_1^{2p} P_{4p/3}(\|\Xbej\|_{L_2(\cD)})
 P_{4p/3}(\|\Xbej\|_{L_{4}(\cD)})\\
 & \quad 
\leq
C\left(1+\|\Xbe^0\|_{L_2(\cD)}^{4p+\frac{2}{p^2}}+\sum_{j=0}^Nk\EE\cF(\Xbej)^{p}\right)+
K\delta\left(1+\EE \left(\sum_{j=1}^Nk|Y_h^j|_{1}^2\right)^p\right). 
\end{aligned}
\end{equation}

It remains to treat the It\^o integral in \eqref{eq:anzats}. For this
we invoke the Cauchy inequality and Burkholder--Davis--Gundy
inequality to conclude that
\begin{equation}\label{eq:term5}
\begin{aligned}
&\EE\sup_{1\leq n\leq N}\Big|\sum_{j=1}^n\langle Y_h^{j-1},\Wj\rangle\Big|^p
\\ & \quad
\leq C\Big(1 +\epsilon'\EE\sup_{0\leq n\leq N}\Big|\sum_{j=1}^n
\langle Y_h^{j-1},\Wj\rangle\Big|^{2p}\Big)
\\ & \quad 
\leq C\Big(1 +\epsilon'\EE\Big(\sum_{j=1}^Nk
\|Q^{1/2}Y_h^{j-1}\|^2
\Big)^{p}\Big)
\\ &\quad 
\leq C\Big(1+\epsilon'\EE\Big(
\sum_{j=1}^Nk\|Q^{1/2}A^{-1/2}P\| 
| Y_h^{j-1}|_1^2\Big)^{p}\Big).
\end{aligned}
\end{equation}
Thus, since $\|Q^{1/2}A^{-1/2}P\|<\infty$, if we take $\epsilon'>0$ small
enough in \eqref{eq:term5} and  $\delta>0$ small enough in \eqref{eq:term4},  
from \eqref{eq:anzats} we may conclude that
\begin{equation}
\begin{aligned}
&\EE\sup_{1\leq n\leq N}J(\Xbe^n)^p+\EE\Big(\sum_{j=1}^Nk| Y_h^j|_1^2\Big)^p\\
&\quad \leq
C\Big(1+J(X_h^0)^p+\|\Xbe^0\|_{L_2(\cD)}^{4p+\frac{1}{p^2}}+k\cF(\Xbe^{0})^{p}+k|Y_h^0|_1^{2p}\\ 
&\qquad+\sum_{j=1}^Nk\EE\sup_{1\leq n\leq j}\cF(\Xbe^{n})^{p}\Big).
\end{aligned}
\end{equation}
Thus, if $Ck<1 $, then the desired result follows from Gronwall's
lemma by noting that $\cF(u)\leq J(u)$.
\end{proof}

\section{Convergence}\label{sec:conv}
Recall from Theorem~\ref{thm:existenceandregularityofX} that $X$
satisfies the mild equation
\begin{align} \label{eq:femtielva}
X(t)=E(t)X_0-\int_0^t AE(t-s)f(X(s))\,\dd s+\int_0^t E(t-s)\,\dd W(s).
\end{align}
Similarly, equation \eqref{eq:BE} has the mild formulation
\begin{align} \label{eq:femtitolv}
\Xbe^n=\Rk^nP_hX_0-\sum_{j=1}^nA_h\Rk^{n-j+1}P_hf(\Xbej)+\sum_{j=1}^n\Rk^{n-j+1}P_h\Wj,
\end{align}
where $\Rk^n=(I+k\, A_h^2)^{-n}$.  

\begin{rem} \label{rem:preservemass} Preservation of mass. From
    \eqref{chsg}, \eqref{eq:femtielva} and
    \eqref{chsg2},\eqref{eq:femtitolv} it follows that if $W(t)$ has
    zero average, i.e., $(I-P)W(t)=0$, then $(I-P)X(t)=(I-P)X_0$ and
    $(I-P)\Xbe^n=(I-P)X_0$. This means that $X(t)$ and $\Xbe^n$ preserve
    the mass.
\end{rem}

In order to prove convergence of $\Xbe^n$, we first state a maximal
type error estimate for the stochastic convolution.  We define the
backward Euler approximation of the stochastic convolution
$W_A(t):=\int_0^t E(t-s)\,\dd W(s)$ by
\begin{align*}
W_{A_h}^n:=\sum_{j=1}^n\Rk^{n-j+1}P_h\Delta W^j
=\sum_{j=1}^n\int_{t_{j-1}}^{t_j}\Rk^{n-j+1}P_h\,\dd W(s).
\end{align*}

\begin{lem}\label{lem:wa}
Let $\gamma\in(0,\frac12]$, $\beta \in[1,2]$, and
  $p\ge 1$.  Then there is $C=C(p,\gamma,T)$ such that for $h,k>0$
  \begin{align}\label{eq:stco}
  \left(\EE\Big(\sup_{0\le n\le N}\|W_A(t_n)-W^n_{A_h}\|^p\Big)\right)^{1/p}
  \le C(h^\beta+\ts^{\beta/4 })\|A^{(\beta-2)/2+\gamma} PQ^{1/2}\|_{\HS}.
  \end{align}
\end{lem}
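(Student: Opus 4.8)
The plan is to represent the error as a single It\^o integral with deterministic operator-valued integrand, reduce the claim to a deterministic (integrated) error estimate for that integrand after smoothing by a power of $A$, and control the supremum over the time levels by the factorization method.

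First I would write, using $\WA(t_n)=\int_0^{t_n}E(t_n-s)\,\dd W(s)$ and the definition of $W_{A_h}^n$,
\begin{equation*}
\WA(t_n)-W_{A_h}^n=\int_0^{t_n}\Phi_n(s)\,\dd W(s),\qquad
\Phi_n(s):=E(t_n-s)-\Rk^{\,n-j+1}P_h,
\end{equation*}
where for $s\in(t_{j-1},t_j]$ the index is $j=j(s)$. Setting $\theta:=\tfrac{\beta-2}2+\gamma\in(-\tfrac12,\tfrac12]$ and using that all processes take values in $\dH$, I would factor $Q^{1/2}=A^{-\theta}P\cdot A^{\theta}PQ^{1/2}$ and apply \eqref{eq:hs} to get, for each fixed $s$,
\begin{equation*}
\|\Phi_n(s)Q^{1/2}\|_{\HS}\le\|\Phi_n(s)A^{-\theta}P\|\,\|A^{\theta}PQ^{1/2}\|_{\HS}.
\end{equation*}
Since $\|A^{\theta}PQ^{1/2}\|_{\HS}$ is exactly the factor on the right of \eqref{eq:stco}, the statement is reduced to the deterministic estimate
\begin{equation*}
\Big(\int_0^{t_n}\|\Phi_n(s)A^{-\theta}P\|^2\,\dd s\Big)^{1/2}\le C(h^{\beta}+k^{\beta/4}),
\end{equation*}
uniformly in $n,h,k$, together with a device for passing to the supremum over $n$.

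For each fixed $n$ the random variable $\WA(t_n)-W_{A_h}^n$ is centered Gaussian, so \eqref{eq:es1a} gives
\begin{equation*}
\EE\|\WA(t_n)-W_{A_h}^n\|^{2p}
\le C\Big(\int_0^{t_n}\|\Phi_n(s)Q^{1/2}\|_{\HS}^2\,\dd s\Big)^{p},
\end{equation*}
which together with the reduction above already yields \eqref{eq:stco} for a single index. To upgrade this to the supremum over $0\le n\le N$ without losing powers of $N=T/k$, I would use the (discrete) factorization method: fixing $\alpha\in(0,\tfrac12)$ and exploiting the semigroup property of $E$ and the corresponding identity for the resolvent powers $\Rk$, together with a Beta-type summation formula, one rewrites the convolution as a deterministic convolution of an auxiliary process $Z(\sigma)$ (itself a stochastic integral whose kernel carries the factor $(\sigma-\cdot)^{-\alpha}$) against the kernel $(t_n-\sigma)^{\alpha-1}E(t_n-\sigma)$ and its discrete analogue. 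H\"older's inequality in $\sigma$ then isolates the $n$-dependence in a bounded deterministic factor and reduces $\EE\sup_n\|\cdot\|^p$ to a single moment bound on $Z$, which is Gaussian and is controlled exactly as above through \eqref{eq:es1a}.

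It remains to prove the deterministic estimate, which is where the real work lies. I would split $\Phi_n(s)=\big(E(t_n-s)-E_h(t_n-s)P_h\big)+\big(E_h(t_n-s)-\Rk^{\,n-j+1}\big)P_h$ into a spatial semidiscretization error and a temporal (rational-approximation) error, and bound each after applying $A^{-\theta}$. The smoothing bounds \eqref{eq:analytic} and \eqref{eq:fem2}, the Ritz estimate \eqref{eq:fem1}, and the parabolic scaling $k^{1/4}\sim h$ natural to the fourth-order generator $A^2$ produce the matched rates $h^{\beta}$ and $k^{\beta/4}$, with a time singularity of the form $(t_n-s)^{-(1-\gamma)/2}$; its square is integrable precisely because $\gamma>0$, which is where the hypothesis $\gamma\in(0,\tfrac12]$ enters and produces the constant $C(p,\gamma,T)$. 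The main obstacle is exactly this deterministic analysis: because the finite element method discretizes $A$ rather than $A^2$ (so $A_h^2\ne(A^2)_h$, as stressed in the introduction), the usual fourth-order error estimates are not directly available, and the argument must be routed through the discrete operators $A_h$ and the Ritz projector, with the resulting mismatch controlled uniformly in $n$, $h$ and $k$.
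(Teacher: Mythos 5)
Your proposal is correct and follows essentially the same route as the paper: the paper's proof likewise combines the discrete factorization method (citing the analogous arguments in \cite{KLLAC} and \cite{MR3273327}) with a deterministic nonsmooth-data error estimate for the linear problem, namely \eqref{eq:lchce}, which yields exactly the singularity $t_{n}^{-(1-\gamma)/2}$ you identify after choosing $\alpha=\beta-2+2\gamma$. The only difference is that the deterministic estimate you flag as ``the real work'' is not re-proved in the paper but quoted from \cite{Stig} and \cite{MR2846757}.
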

\begin{proof}
  The proof is completely analogous to the proofs of
  \cite[Proposition~5.1]{KLLAC} and \cite[Theorem~2.1]{MR3273327},
  based on a discrete factorization method using the analyticity of
  the semigroup $E$ and the deterministic error estimate
\begin{equation}
\label{eq:lchce}
\|(E(t_n)-\Rk^n)P_hv\|
\le C(h^{\beta} +\ts^{\beta/4} )t_{n}^{-(\beta-\alpha)/{4}}|v|_{\alpha},
\quad t_n>0,
\end{equation}
for $\beta\in[1,2]$, $\alpha\in[-1,1]$, from \cite[Lemma 5.5]{Stig}
and \cite[Theorem 2.2]{MR2846757}.
\end{proof}
\begin{rem} In \cite[Proposition~5.1]{KLLAC}, instead of the
    term $\|A^{(\beta-2)/2+\gamma} PQ^{1/2}\|_{\HS}$ which appears on
    the right hand side of \eqref{eq:stco}, one has
    $\|A^{(\beta-1)/2+\gamma}Q^{1/2}\|_{\HS}$. The reason for this
    difference (besides the different boundary conditions) is that
    \cite{KLLAC} considers the stochastic Allen--Cahn equation where
    the semigroup in the stochastic convolution is generated
    by the Laplacian $\Delta$ which has a weaker smoothing effect than
    in the present case, where the semigroup is generated by
    $-\Delta^2$. The proofs in \cite{KLLAC} and \cite{MR3273327}
    require that $p$ is large, but the result is then valid for
    smaller $p\ge1$ as well.
\end{rem}
\begin{lem}\label{lem:dete}
Let $0<\delta<1$.  The following deterministic error estimates hold for $v\in H$:
\begin{align}
\|A_h\Rk^{n}P_hv-A_hE_h(t_n)P_hv\|
&\leq Ck^{\frac12(1-\delta)}t_n^{-1+\frac{\delta}{2}}\|v\|,\quad t_n>0\text{ and }h,k>0,
\label{eq:dette}
\\
\|A_hE_h(t)P_hv-AE(t)v\|
&\leq Ch^{2(1-\delta)}t^{-1+\frac{\delta}{2}}\|v\|, \quad t>0\text{ and }h>0.
\label{eq:dette1} 
\end{align}
\end{lem}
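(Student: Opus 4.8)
The plan is to prove each of \eqref{eq:dette} and \eqref{eq:dette1} by establishing two endpoint bounds---a crude \emph{smoothing} bound carrying no convergence rate and a \emph{sharp} bound---and then combining them by the elementary identity $X=X^{\delta}X^{1-\delta}$ (for $X\ge0$, $0<\delta<1$), bounding the first factor by the smoothing endpoint and the second by the sharp one. For \eqref{eq:dette} the endpoints are $\norm{A_h(\Rk^n-E_h(t_n))P_hv}\le Ct_n^{-1/2}\norm{v}$ (smoothing) and $\le Ck^{1/2}t_n^{-1}\norm{v}$ (sharp), whose combination yields exactly $k^{(1-\delta)/2}t_n^{-1+\delta/2}$. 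For \eqref{eq:dette1} the endpoints are $\norm{A_hE_h(t)P_hv-AE(t)v}\le Ct^{-1/2}\norm{v}$ (smoothing) and $\le Ch^{2}t^{-1}\norm{v}$ (sharp), yielding $h^{2(1-\delta)}t^{-1+\delta/2}$. In both cases the smoothing endpoint is immediate from \eqref{eq:fem2} (for the first) and from \eqref{eq:analytic} and \eqref{eq:fem2} with $\alpha=1$ (for the second), together with the triangle inequality. Thus the whole task reduces to the two sharp endpoints.

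The sharp endpoint for \eqref{eq:dette} is purely spectral. Both $\Rk^n=(I+kA_h^2)^{-n}$ and $E_h(t_n)=\ee^{-t_nA_h^2}$ are functions of $A_h^2$, so $A_h(\Rk^n-E_h(t_n))$ is diagonal in the eigenbasis $\{\varphi_{h,j}\}$ of $A_h$, with the $j=0$ mode annihilated by $A_h$; since $\norm{P_hv}\le\norm{v}$, its norm is $\sup_j\lambda_{h,j}\,\big|r(k\lambda_{h,j}^2)^n-\ee^{-nk\lambda_{h,j}^2}\big|$, where $r(s)=(1+s)^{-1}$. Setting $s=k\lambda_{h,j}^2$ and using $\lambda_{h,j}=k^{-1/2}s^{1/2}$, everything reduces to the scalar inequality $s^{1/2}\big|r(s)^n-\ee^{-ns}\big|\le C/n$, valid for all $s\ge0$ with $C$ independent of $n$. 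This I would prove from the telescoping identity $r(s)^n-\ee^{-ns}=(r(s)-\ee^{-s})\sum_{i=0}^{n-1}r(s)^i\ee^{-(n-1-i)s}$ together with the elementary bounds $|r(s)-\ee^{-s}|\le Cs^2(1+s)^{-1}$ and $\ee^{-s}\le r(s)\le1$, which give $|r(s)^n-\ee^{-ns}|\le Cn\,s^2(1+s)^{-n}$ and hence $s^{1/2}|r(s)^n-\ee^{-ns}|\le Cn\sup_{s\ge0}s^{5/2}(1+s)^{-n}\le Cn^{-3/2}\le C/n$; this is standard rational-approximation analysis.

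The sharp endpoint for \eqref{eq:dette1} is the genuinely new content. The algebraic key is the identity $A_hR_h=P_hA$ on $\dH^2$, immediate from the definitions of the Ritz projector and of $A_h$ via Green's formula. With $u(t)=E(t)v\in\dH^2$ this lets me split
\[
A_hE_h(t)P_hv-AE(t)v=A_h\big(E_h(t)P_hv-R_hE(t)v\big)+(P_h-I)AE(t)v.
\]
The second term is controlled by the projection estimate \eqref{eq:fem1} (using $\norm{(I-P_h)g}\le\norm{(I-R_h)g}$ by $L_2$-optimality of $P_h$) and \eqref{eq:analytic} with $\alpha=2$, giving $\norm{(P_h-I)AE(t)v}\le Ch^2\norm{A^2E(t)v}\le Ch^2t^{-1}\norm{v}$. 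Hence the whole estimate reduces to the semidiscrete error $\theta(t)=E_h(t)P_hv-R_hE(t)v\in\dot{S}_h$ measured in the discrete $H^2$-norm, for which I must show $\norm{A_h\theta(t)}\le Ch^2t^{-1}\norm{v}$.

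The bound on $\norm{A_h\theta(t)}$ is the main obstacle and the reason the lemma is nontrivial: it demands one spatial derivative more than the $L_2$-type estimate \eqref{eq:lchce}. From $A_hR_h=P_hA$ one finds that $\theta$ solves $\dot\theta+A_h^2\theta=\rho$ with residual $\rho=(R_h-P_h)A^2u+A_h(R_h-P_h)Au$, and Duhamel's formula expresses $A_h\theta(t)$ through the smoothing operators $A_hE_h(t-s)$. The difficulty is that for merely $v\in H$ the data is nonsmooth and the second piece of $\rho$ carries no $h^2$ gain that survives a naive estimate---indeed inserting the crude bounds into the convolution produces the non-integrable singularity $\int_0^t(t-s)^{-1}s^{-1}\,\dd s$. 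This is precisely the situation addressed by the nonsmooth-data technique for the Cahn--Hilliard semigroup: one splits $t=\tfrac t2+\tfrac t2$ so that $E(t/2)v$ and $E_h(t/2)P_hv$ act as smooth data on $[t/2,t]$, and uses duality to trade the excess derivative, as carried out in \cite{Stig,MR2846757} (with the parabolic template in \cite[Chapt.~3]{Thomeebook}). Invoking these arguments with the extra factor $A_h$ tracked throughout yields the sharp endpoint, and the geometric-mean step described above then completes the proof.
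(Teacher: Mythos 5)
Your overall architecture---proving each estimate at the two endpoints $\delta=1$ (pure smoothing, no rate) and $\delta=0$ (sharp rate) and interpolating---is exactly the paper's strategy, and your treatment of \eqref{eq:dette} is sound: the paper disposes of that case with the one-line remark that it follows ``by expansion in the eigenbasis of $A_h$ and Parseval's identity,'' and your scalar inequality $s^{1/2}\,|(1+s)^{-n}-\ee^{-ns}|\le C/n$ via the telescoping identity is a correct way to fill in that detail (you only need to treat $n=1,2$ separately, since $\sup_{s\ge0}s^{5/2}(1+s)^{-n}$ is infinite there, but the bound is trivial in those cases). Your reduction of \eqref{eq:dette1} via the identity $A_hR_h=P_hA$, the splitting into $A_h\theta(t)+(P_h-I)AE(t)v$, and the derivation of the residual equation for $\theta$ are also algebraically correct, and you correctly locate the difficulty.

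However, at precisely that point the proposal stops being a proof. The bound $\|A_h\theta(t)\|\le Ch^2t^{-1}\|v\|$ for merely $v\in H$ is the new mathematical content of the lemma---the introduction of the paper singles out \eqref{eq:dette1} as ``a new error estimate for the derivative of the error in the spatial semidiscretization of the linear deterministic Cahn--Hilliard equation''---and it is not contained in \cite{Stig} or \cite{MR2846757}, which supply $L_2$-norm nonsmooth-data estimates such as \eqref{eq:lchce}, not estimates in the discrete $H^2$-type norm $\|A_h\cdot\|$. Writing ``invoking these arguments with the extra factor $A_h$ tracked throughout'' therefore defers exactly the step that requires work: as you yourself observe, the naive Duhamel estimate produces the divergent integral $\int_0^t(t-s)^{-1}s^{-1}\,\dd s$, and removing it needs a genuine combination of time-splitting, integration by parts in the convolution, and kick-back arguments, none of which you carry out. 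The paper, for comparison, avoids your nonsmooth-data formulation altogether: it pre-smooths by factoring $v=A(A^{-1}v)$, so the sharp endpoint becomes an estimate for the time-derivative error $D_t(E_h(t)P_h-E(t))A^{-1}v$ with data $A^{-1}v\in\dH^2$, plus the term $A_h^2E_h(t)P_h(R_h-I)A^{-1}v$, which is handled by \eqref{eq:fem1} and \eqref{eq:fem2}; the time-derivative error is then proved by differentiating the integrated error equation \eqref{eq:error1}, multiplying by $t$, and applying the stability estimate \eqref{eq:error3} together with kick-back arguments, following \cite[Chapt.~3]{Thomeebook} and \cite[Sect.~5]{Stig}. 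Your route (nonsmooth data, discrete $H^2$ norm) is plausibly viable, but completing it would require an argument of comparable length and delicacy to the paper's; as written, the central estimate of \eqref{eq:dette1} is asserted rather than proved.
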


\begin{proof}
Note that it is enough to consider $v\in \dot{H}$, as for $v$
constant the above differences equal 0.  The error bounds follow by a
simple interpolation between the cases $\delta=0$ and $\delta=1$.  For
$\delta=1$ we use the estimates from \eqref{eq:analytic} and \eqref{eq:fem2} to get
\begin{equation*}
\|A_h\Rk^{n}P_h\|+\|A_hE_h(t_n)P_h\|\leq C t_n^{-1/2},\quad 
\|A_hE_h(t)P_h\|+\|AE(t)\|\leq Ct^{-1/2}. 
\end{equation*}
Estimate \eqref{eq:dette} with $\delta =0$ follows by expansion
  in the eigenbasis of $A_h$ and Parseval's identity. For estimate
\eqref{eq:dette1} with $\delta=0$, we first write, for $v\in \dot{H}$,
\begin{align*}
&\|A_hE_h(t)P_hv-AE(t)v\|
\leq 
\|(A_h^2E_h(t)P_h-A^2E(t))A^{-1}v\|
\\ & \qquad \quad
+\|A_h^2E_h(t)P_h(A_h^{-1}P_h-A^{-1})v\|
\\ & \qquad 
=\|D_t(E_h(t)P_h-E(t))A^{-1}v\|
+\|A_h^2E_h(t)P_h(R_h-I)A^{-1}v\|.
\end{align*}
where we used the identity $R_h=A_h^{-1}P_hA$. The desired
bound $Ct^{-1}h^2\norm{A(A^{-1}v)}=Ch^2t^{-1}\norm{v}$ for the last term follows
immediately from \eqref{eq:fem1} and \eqref{eq:fem2}.  The first term
is an error estimate for the time derivative of the solution of the
linear Cahn--Hilliard equation with smooth initial-value
$u_0=A^{-1}v\in\dot{H}^2$.  To prove this we adapt the arguments in
\cite[Chapt.~3]{Thomeebook} and \cite[Sect.~5]{Stig}, where error
estimates with lower initial regularity are proved.  Let
$u(t)=E(t)u_0$, $u_h(t)=E_h(t)P_hu_0$.  Then the error $e=u_h-u$
satisfies the equation, see \cite[(5.4)]{Stig},
\begin{align}
  \label{eq:error1}
G_h^2\dot{e}+e=\rho+G_h\eta,\ t>0;
\quad P_he(0)=0,
\end{align}
with 
\begin{align}
  \label{eq:error2}
  G_h=A_h^{-1}P_h, \quad R_h=G_hA, \quad \rho=(R_h-I)u, \quad \eta=-(R_h-I)A^{-1}\dot{u}.
\end{align}
Any solution of an equation of the form \eqref{eq:error1} satisfies
the following bound, with arbitrary $\epsilon>0$,
\begin{align}
  \label{eq:error3}
  \|e(t)\|\le \epsilon \sup_{s\in[0,t]}s\|\dot{\rho}(s)\| 
  +C_\epsilon \sup_{s\in[0,t]}\|\rho(s)\| 
+\Big( \int_0^t \|\eta(s)\|^2\,\dd s \Big)^{1/2}.  
\end{align}
In order to prove this we let $e_1$ be the solution of
\eqref{eq:error1} with only $\rho$ as the source term and
$P_he_1(0)=0$.  Moreover, we let $e_2$ solve the same equation but
driven by $G_h\eta$ alone and with $e_2(0)=0$.  Then $e=e_1+e_2$
solves \eqref{eq:error1}.  We quote a bound for $e_1$ from
\cite[Lemma~3.5]{Thomeebook}:
\begin{align*}
  \|e_1(t)\|\le \epsilon \sup_{s\in[0,t]}s\|\dot{\rho}(s)\| 
  +C_\epsilon \sup_{s\in[0,t]}\|\rho(s)\| . 
\end{align*}
In order to quote this lemma we note that $G_h$ is
selfadjoint, positive semidefinite on $\dot{H}$ and that
$G_he_1(0)=A_h^{-1}P_he_1(0)=0$.  For $e_2$ we have
\begin{align*}
    \|e_2(t)\|\le \Big( \int_0^t \|\eta(s)\|^2\,\dd s \Big)^{1/2}.  
\end{align*}
This is proved by a simple energy argument, see the beginning of the
proof of \cite[Lemma~5.2]{Stig}.  The reason why we need different
proofs for $e_1$ and $e_2$ is that $G_h$ in front of $\eta$ must not
appear in \eqref{eq:error3} for we have good bounds for $\eta$ but not
for $G_h\eta$.  

This proves \eqref{eq:error3}, which can now be combined with bounds
for $\rho$ and $\eta$, obtained from bounds for $R_h$ and regularity
estimates for $u=E(t)u_0$, to get an error bound for $\|e(t)\|$.
However, we aim for  $\|\dot{e}(t)\|$ and therefore take the
derivative of the equation in \eqref{eq:error1} and multiply by $t$ to
obtain an equation for $t\dot{e}(t)$:  
\begin{align*}
  tG_h^2\ddot{e}+t\dot{e} 
=t\dot{\rho}+tG_h\dot{\eta},
\end{align*}
which can be written as 
\begin{align*}
G_h^2(t\dot{e})\dot{}+(t\dot{e}) 
=G_h^2\dot{e}+t\dot{\rho}+G_h(t\dot{\eta} )
=
-e+\rho+t\dot{\rho}+G_h(\eta+t\dot{\eta}) ,
\end{align*}
where we substituted $G_h^2\dot{e}=-e+\rho+G_h\eta$ from
\eqref{eq:error1}.  Thus,  $t\dot{e}$ satisfies an equation of the
form \eqref{eq:error1} but with $\rho$ and $\eta$ replaced by
$-e+\rho+t\dot{\rho}$ and $\eta+t\dot{\eta}$.  An application of 
\eqref{eq:error3} with $\epsilon=\frac12$, say, gives 
\begin{align*}
t\|\dot{e}(t)\|
&
\le \tfrac12 \sup_{s\in[0,t]}\big(
s\|\dot{e}(s)\|+ 2s\|\dot{\rho}(s)\| + s^2\|\ddot{\rho}(s)\|
\big)
\\ & \quad 
+C \sup_{s\in[0,t]}\big( 
\|e(s)\|+\|\rho(s)\| +s\|\dot{\rho}(s)\|
\big)
\\ & \quad 
+\Big( \int_0^t\big( \|\eta(s)\|^2+s^2\|\dot{\eta}(s)\|^2\big)\,\dd s \Big)^{1/2}.  
\end{align*}
Since $t$ is arbitrary here we may apply a standard kick-back argument
to remove the term $s\|\dot{e}(s)\|$.  Another application of
\eqref{eq:error3}, now with $\epsilon=1$, takes care of the term
$\|e(s)\|$, which leads to 
\begin{align*}
t\|\dot{e}(t)\|
&\le C\sup_{s\in[0,t]}\big(
\|\rho(s)\| +s\|\dot{\rho}(s)\| + s^2\|\ddot{\rho}(s)\|
\big)
\\ &\quad 
+C\Big( \int_0^t\big( \|\eta(s)\|^2+s^2\|\dot{\eta}(s)\|^2\big)\,\dd s \Big)^{1/2}.  
\end{align*}
Here we use \eqref{eq:fem1} and recall the regularity estimates
\eqref{eq:analytic}, \eqref{eq:analytic1} for $u(t)=E(t)A^{-1}v$:
\begin{align*}
  s^j\|D_s^j\rho(s)\| 
 = s^j\|(R_h-I)D_s^ju(s)\| 
 \le Ch^2 s^j\|AD_s^j E(s)A^{-1}v\| 
 \le Ch^2 \|v\| 
\end{align*}
and
\begin{align*}
  &\Big( \int_0^t s^{2j}\|D_s^j{\eta}(s)\|^2\,\dd s \Big)^{1/2}
=
  \Big( \int_0^t s^{2j}\|(R_h-I)A^{-1}D_s^j\dot{u}(s)\|^2\,\dd s \Big)^{1/2}
\\ & \qquad 
\le Ch^2  \Big( \int_0^t s^{2j}\|D_s^{j+1}u(s)\|^2\,\dd s \Big)^{1/2}
\\ & \qquad 
\le Ch^2  \Big( \int_0^t s^{2j}\|A^{2j}E(s)A^{-1}v\|^2\,\dd s \Big)^{1/2}
\\ & \qquad 
\le Ch^2  \Big( \int_0^t s^{2j}\|A^{2j+1}E(s)v\|^2\,\dd s \Big)^{1/2}
\le Ch^2 \|v\|.
\end{align*}
This completes the proof. 
\end{proof}

\begin{thm}\label{thm:pconv}
  Suppose that \eqref{eq:maincond} holds,
  $\|A^{1/2}Q^{1/2}\|_{\HS}<\infty$, $\beta\in[1,2]$, and that
  $|X_0|_\beta<\infty$.  Let $h>0$ and $k>0$ be small and
  $0<\epsilon,\delta<1$. Then, there is
  $\Omega^{\epsilon}_{h,k}\subset \Omega$ with
  $\mathbf{P}(\Omega^{\epsilon}_{h,k})>1-\epsilon$, and
  $C=C(T,\epsilon,\delta)$ such that for all
  $\omega\in \Omega^{\epsilon}_{h,k}$,
\begin{align*}
\|X(t_n)-\Xbe^n\|
\leq 
C\left((h^\beta+\ts^{\beta/4} ) |X_0|_{\beta}
+h^{2(1-\delta)}+\ts^{\frac12(1-\delta)}\right),\quad t_n\in [0,T].    
\end{align*}
\end{thm}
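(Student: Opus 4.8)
The plan is to subtract the two mild representations \eqref{eq:femtielva} and \eqref{eq:femtitolv}, estimate the resulting error $e^n:=X(t_n)-\Xbe^n$ pathwise on a set $\Omega^{\epsilon}_{h,k}$ of probability $>1-\epsilon$, and close with a weakly singular discrete Gronwall inequality. Since $X_0\in\dH$ and, by Remark~\ref{rem:preservemass}, both $X(t_n)$ and $\Xbe^n$ have zero mean, the error $e^n$ lies in $\dH$, so negative-order norms of $e^n$ are available. Grouping the terms as
\begin{align*}
e^n
&=\bigl(E(t_n)X_0-\Rk^nP_hX_0\bigr)+\bigl(W_A(t_n)-W_{A_h}^n\bigr)\\
&\quad-\int_0^{t_n}AE(t_n-s)f(X(s))\,\dd s+\sum_{j=1}^n kA_h\Rk^{n-j+1}P_hf(\Xbej),
\end{align*}
the first term is bounded deterministically by $C(h^\beta+k^{\beta/4})|X_0|_\beta$ using \eqref{eq:lchce} with $\alpha=\beta$. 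The second term is exactly the object of Lemma~\ref{lem:wa}; since $\|A^{(\beta-2)/2+\gamma}PQ^{1/2}\|_{\HS}\le C\HSAQ$ for $\gamma\le\tfrac12$, applying Chebyshev's inequality to the $p$-th moment in \eqref{eq:stco} shows that $\sup_{0\le n\le N}\|W_A(t_n)-W_{A_h}^n\|\le C(h^\beta+k^{\beta/4})$ off an event of small probability.

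For the nonlinear part I would split, on each subinterval $[t_{j-1},t_j]$,
\[
f(X(s))-f(\Xbej)=\bigl(f(X(s))-f(X(t_j))\bigr)+\bigl(f(X(t_j))-f(\Xbej)\bigr),
\]
and treat the exact-solution piece and the feedback piece separately. For the operator error against the exact solution I write $AE(t_n-s)-A_h\Rk^{n-j+1}P_h$ as the sum of the spatial semidiscretization error \eqref{eq:dette1} and the time discretization error \eqref{eq:dette}; applied to $f(X(t_j))\in H$ these produce kernels $h^{2(1-\delta)}(t_n-s)^{-1+\delta/2}$ and $k^{\frac12(1-\delta)}(t_n-s)^{-1+\delta/2}$, whose singularities are integrable over $[0,t_n]$, multiplying $\|f(X(t_j))\|\le C(1+|X(t_j)|_1^3)$. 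The time-regularity term $AE(t_n-s)\bigl(f(X(s))-f(X(t_j))\bigr)$ I would instead measure in the negative norm, using $\|AE(\tau)g\|=\|A^{3/2}E(\tau)A^{-1/2}g\|\le C\tau^{-3/4}\|g\|_{L_{6/5}(\cD)}$ together with
\[
\|f(X(s))-f(X(t_j))\|_{L_{6/5}(\cD)}\le C\bigl(1+\|X(s)\|_{L_6(\cD)}^2+\|X(t_j)\|_{L_6(\cD)}^2\bigr)\|X(s)-X(t_j)\|,
\]
so that only the $L_2$-in-space time increment appears, which Proposition~\ref{prop:ph} bounds by $K\,|s-t_j|^\gamma$ with the random Hölder constant $K$ and $\gamma=\tfrac12(1-\delta)$. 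After integrating $\tau^{-3/4}$ over $[0,t_n]$ these three contributions are all bounded by $C(R,K)\bigl(h^{2(1-\delta)}+k^{\frac12(1-\delta)}\bigr)$, where $R:=1+\sup_{0\le j\le N}\bigl(|X(t_j)|_1^2+|\Xbej|_1^2\bigr)$ and $\dH^1\hookrightarrow L_6(\cD)$ ($d\le3$) was used.

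The feedback term is where the estimate must be closed. Writing $A_h\Rk^m P_h g=A_h^{3/2}\Rk^m\bigl(A_h^{-1/2}P_hg\bigr)$ and using the smoothing bound \eqref{eq:fem2} with $\alpha=\tfrac32$ together with the negative-norm estimate of the cubic increment obtained exactly as in the computation leading to \eqref{eq:AmFdiff} (with $\Xbej-\Xbe^{j-1}$ replaced by $e^j$),
\[
\bigl\|A_h^{-1/2}P_hP\bigl(f(X(t_j))-f(\Xbej)\bigr)\bigr\|\le C\bigl(1+\|X(t_j)\|_{L_6(\cD)}^2+\|\Xbej\|_{L_6(\cD)}^2\bigr)\|e^j\|,
\]
I obtain
\[
\Bigl\|\sum_{j=1}^n kA_h\Rk^{n-j+1}P_h\bigl(f(X(t_j))-f(\Xbej)\bigr)\Bigr\|\le CR\sum_{j=1}^n k\,t_{n-j+1}^{-3/4}\|e^j\|.
\]
The $j=n$ summand carries a factor $k^{1/4}$ and is absorbed on the left for sufficiently small $k$, while the kernel $t_{n-j+1}^{-3/4}$ is weakly singular with exponent $<1$; hence a discrete Gronwall inequality with weakly singular kernel yields $\sup_{0\le n\le N}\|e^n\|\le C(R,K,T)\bigl((h^\beta+k^{\beta/4})|X_0|_\beta+h^{2(1-\delta)}+k^{\frac12(1-\delta)}\bigr)$.

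It remains to render $R$ and $K$ deterministic. Applying Chebyshev's inequality to the uniform moment bound \eqref{eq:esup} (which controls $\sup_j|X(t_j)|_1$) and to Theorem~\ref{thm:H1moment} (which controls $\sup_j|\Xbej|_1^2\le C(1+\sup_jJ(\Xbej))$), using the almost sure finiteness of $K$ from Proposition~\ref{prop:ph} and the moment bound of Lemma~\ref{lem:wa}, I choose thresholds $R(\epsilon),K(\epsilon)$ and define $\Omega^{\epsilon}_{h,k}$ as the intersection of the corresponding good events, so that its complement has probability $<\epsilon$; absorbing $R(\epsilon),K(\epsilon)$ into the constant gives $C=C(T,\epsilon,\delta)$. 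The main obstacle is precisely the locally Lipschitz cubic nonlinearity in dimension $d\le3$: a direct $L_2$ estimate of $f(X(t_j))-f(\Xbej)$ would force the $\dH^1$-seminorm of the error onto the right-hand side, which cannot be absorbed. The resolution is to measure every nonlinear difference in the negative norm $A_h^{-1/2}P_h$, so that only $\|e^j\|$ appears, at the price of the stronger smoothing exponent $\tfrac34$, and to localize the superlinear prefactors $R,K$ on the large-probability set via the moment bounds of Section~\ref{sec:momb}; the resulting weakly singular Gronwall inequality then closes the argument.
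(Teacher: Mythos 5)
Your proposal follows the paper's own proof in all essentials: subtract the two mild formulations \eqref{eq:femtielva} and \eqref{eq:femtitolv}; bound the initial-data term by \eqref{eq:lchce}; bound the stochastic convolution via Lemma~\ref{lem:wa} and Chebyshev; split the nonlinear convolution into an operator-error part, a time-regularity part, and a feedback part, with the nonlinear differences measured in a negative norm so that only $\|e^j\|$ appears against the weakly singular kernel $t_{n-j+1}^{-3/4}$; localize the random constants on a large-probability set using the moment bounds of Theorems~\ref{thm:existenceandregularityofX} and \ref{thm:H1moment} and Proposition~\ref{prop:ph}; and close with a singular Gronwall argument. (The paper runs the feedback term through the continuous semigroup $A^{3/2}E(t_n-t_{j-1})A^{-1/2}P$ rather than your discrete $A_h^{3/2}\Rk^{n-j+1}A_h^{-1/2}P_h$; the two are interchangeable, and your $L_2\times L_6$ H\"older split of the cubic increment is the same local Lipschitz bound the paper cites.)

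Two bookkeeping slips should be repaired. First, your operator-error term does not decompose as claimed: \eqref{eq:dette} compares $A_h\Rk^{n-j+1}P_h$ with $A_hE_h(t_{n-j+1})P_h$, and \eqref{eq:dette1} compares $A_hE_h(\tau)P_h$ with $AE(\tau)$ at \emph{equal} times, so their sum bridges $A_h\Rk^{n-j+1}P_h$ to $AE(t_n-t_{j-1})$, not to $AE(t_n-s)$ for $s\in[t_{j-1},t_j]$. The leftover term $A\bigl(E(t_n-t_{j-1})-E(t_n-s)\bigr)f(X(t_j))$ --- the paper's $e^{2,4}_{n,j}$ --- must be estimated separately, e.g.\ via $\|(E(t)-I)x\|\le Ct^{\frac12(1-\delta)}\|A^{1-\delta}x\|$ together with \eqref{eq:analytic}; this produces the same admissible kernel $Ck^{\frac12(1-\delta)}(t_n-s)^{-1+\frac{\delta}{2}}$, so the omission is harmless but your three-term count is incomplete. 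Second, applying Lemma~\ref{lem:wa} with the hypothesis' $\beta$ bounds the convolution error by $C(h^\beta+k^{\beta/4})$ \emph{without} the factor $|X_0|_\beta$; for $\beta<2$ and small $\delta$ this is not dominated by $h^{2(1-\delta)}+k^{\frac12(1-\delta)}$, so it does not yield the stated estimate when $|X_0|_\beta$ is small. As in the paper, take $\beta=2$ and $\gamma=\tfrac12$ in the lemma (only $\HSAQ<\infty$ is needed, since $\|A^{\gamma}PQ^{1/2}\|_{\HS}\le C\HSAQ$ for $\gamma\le\tfrac12$), so that this contribution, $C(h^2+k^{1/2})$, is absorbed into $h^{2(1-\delta)}+k^{\frac12(1-\delta)}$.
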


\begin{proof}
  It follows from Proposition~\ref{prop:ph} for
    \eqref{eq:plip}, Theorem~\ref{thm:existenceandregularityofX} for
    \eqref{eq:path1}, Theorem~\ref{thm:H1moment} for \eqref{eq:path2},
    and Lemma~\ref{lem:wa} for \eqref{eq:path3} that for every
  $0<\epsilon,\delta<1$ and $h,k>0$ small enough, there is
  $\Omega^{\epsilon}_{h,k}\subset \Omega$ with
  $\mathbf{P}(\Omega^{\epsilon}_{h,k})>1-\epsilon$ and
  $K_{T,\epsilon}>0$ such that
\begin{align}
\|X(t)-X(s)\|
&\leq  K_{T,\epsilon}|t-s|^{\frac12(1-\delta)},
&&\quad t,s\in [0,T],\ \omega\in \Omega^{\epsilon}_{h,k},\label{eq:plip}\\
|X(t)|^2_{1}+\|X(t)\|^4_{L_4}
&\leq  K_{T,\epsilon},
&&\quad t\in [0,T],\ \omega\in \Omega^{\epsilon}_{h,k},\label{eq:path1}\\
|\Xbe^n|^2_1+\|\Xbe^n\|^4_{L_4}
&\leq K_{T,\epsilon},
&&\quad t_n\in [0,T],\ \omega\in \Omega^{\epsilon}_{h,k}\label{eq:path2}\\
\|W_A(t_n)-W^n_{A_h}\|
&\leq K_{T,\epsilon}(h^2+\ts^{1/2}),
&&\quad t_n\in [0,T],\ \omega\in \Omega^{\epsilon}_{h,k},\label{eq:path3}.
\end{align}
Note that it is enough to establish the above four bounds individually
with $\epsilon/4$ on $\Omega^{\epsilon/4,i}_{h,k}$, $i=1,...,4$, and
then set
$\Omega^{\epsilon}_{h,k}=\cap_{i=1}^4 \Omega^{\epsilon/4,i}_{h,k}$.
The estimate in \eqref{eq:plip} follows directly from the assumption
on the initial data and Proposition~\ref{prop:ph}.  The remaining
bounds can be proved by using Chebychev's inequality together with
bounds from Theorem~\ref{thm:existenceandregularityofX},
Theorem~\ref{thm:H1moment}, and Lemma~\ref{lem:wa}. For example, to
prove \eqref{eq:path3}, consider
\begin{align*}
F_{h,k}:=\frac{\sup_{0\le n\le N}\|W_A(t_n)-W^n_{A_h}\|}{h^2+\ts^{1/2}}.  
\end{align*}
Chebychev's inequality and Lemma~\ref{lem:wa}, for some $p\ge1$,
$\gamma=\frac{1}{2}$, and $\beta=2$, give
\begin{equation*}
\mathbf{P}\Big(\big\lbrace \omega \in \Omega:  F_{h,k} > \alpha \big\rbrace \Big)
\le \frac{1}{\alpha^p} \IE\big[F_{h,k}^p\big]
\le \frac{K^p}{\alpha^p}, \quad K=C\norm[\HS]{A^{1/2}Q^{1/2}}.  
\end{equation*}
We choose
$\alpha = \epsilon^{-1/p}K$ and set
$\Omega^\epsilon_{h,k}=\big \lbrace \omega \in \Omega : F_{h,k} \le
\epsilon^{-1/p}K \big\rbrace$.  Then
\begin{equation*}
\mathbf{P}(\Omega^\epsilon_{h,k})
= 1- \mathbf{P}\Big(\big\lbrace \omega \in \Omega: 
F_{h,k} > \epsilon^{-1/p}K  \big\rbrace \Big)
\ge 1-\epsilon,
\end{equation*}
and \eqref{eq:path3} follows.

Now let $\omega \in \Omega^{\epsilon}_{h,k}$.  We decompose the error
$e_n:=X(t_n)-\Xbe^n$ as
\begin{align*}
\begin{aligned}
e_n
&
=\Big(E(t_n)-\Rk^nP_h\Big)X_0
\\ &\quad 
+\sum_{j=1}^n\int_{t_{j-1}}^{t_j}
\Big(A_h\Rk^{n-j+1}P_hf(\Xbej)-AE(t_n-s)f(X(s))\Big)\,\dd s
\\ &\quad 
+W_A(t_n)-W^n_{A_h}=:e_n^1+e_n^2+e_n^3.
\end{aligned}
\end{align*}
For the first error term we use \eqref{eq:lchce} to get
\begin{align*}
\|e_n^1\|\leq C(h^{\beta}+\ts^{\beta/4})|X_0|_{\beta}.  
\end{align*}
The term $e_n^2$ is most involved and we decompose it further as
\begin{align*}
\begin{aligned}
&  A _h\Rk^{n-j+1}P_hf(\Xbej)-AE_h(t_n-s)f(X(s))
\\ &\qquad
=
\Big(A_h\Rk^{n-j+1}-A_hE_h(t_n-t_{j-1})\Big)P_hf(\Xbej)
\\ &\qquad \quad
+\Big(A_hE_h(t_n-t_{j-1})P_h-AE(t_n-t_{j-1})\Big)f(\Xbej)
\\ &\qquad \quad
+ A^{3/2}E(t_n-t_{j-1})A^{-1/2}P\Big(f(\Xbej)-f(X(t_j))\Big) 
\\ &\qquad \quad
+A\Big(E(t_n-t_{j-1})-E(t_n-s)\Big)f(X(t_j))
\\ &\qquad \quad
+A^{3/2}E(t_n-s)A^{-1/2}P\Big(f(X(t_j))-f(X(s))\Big)
\\ & \qquad
=:e_{n,j}^{2,1}+e_{n,j}^{2,2}+e_{n,j}^{2,3}+e_{n,j}^{2,4}+e_{n,j}^{2,5}.
\end{aligned}
\end{align*}
Here we used \eqref{chsg} to obtain $AE(t)=A^{3/2}E(t)A^{-1/2}P$.
Further, since $f$ is cubic we have $\|f(x)\|\le C(1+|x|_1^3)$ by
H\"older's and Sobolev's inequalities. Then by \eqref{eq:dette}, and
\eqref{eq:path2} it follows that, with $C=C(T,\epsilon,\delta)$,
\begin{align*}
\|e_{n,j}^{2,1}\|
&
\leq 
Ck^{\frac12(1-\delta)}t_{n-j+1}^{-1+\frac{\delta}{2}}\|f(\Xbej)\|
\leq 
Ck^{\frac12(1-\delta)}t_{n-j+1}^{-1+\frac{\delta}{2}}\big(1+|\Xbej|^3_{1}\big)
\\ & 
\leq Ck^{\frac12(1-\delta)}t_{n-j+1}^{-1+\frac{\delta}{2}}.
\end{align*}
Similarly, by \eqref{eq:dette1} and \eqref{eq:path2},
\begin{align*}
\|e_{n,j}^{2,2}\|
\leq C h^{2(1-\delta)}t_{n-j+1}^{-1+\frac{\delta}{2}}.
\end{align*}
Also, using the local Lipschitz bound
$|P(f(x)-f(y))|_{-1}\le C (1+|x|^2_1+|y|^2_1)\|x-y\| $, cf.\
\eqref{eq:AmFdiff}, together with \eqref{eq:path1}, and
\eqref{eq:path2}, we obtain
\begin{align*}
\begin{aligned}
\|e_{n,j}^{2,3}\|
&
\leq C(t_n-t_{j-1})^{-3/4}|P(f(\Xbej)-f(X(t_j))|_{-1}
\\ &
\leq Ct_{n-j+1}^{-3/4}(1+|\Xbej|^2_1+|X(t_j)|^2_1)\|\Xbej-X(t_j)\|
\\ &
\leq C t_{n-j+1}^{-3/4}\|e_j\|.
\end{aligned}
\end{align*}
Furthermore, for $s\in [t_{j-1},t_j]$, by \eqref{eq:analytic} and
$\|(E(t)-I) x \|\le Ct^{\frac12(1-\delta)}\|A^{1-\delta}x\|$, we have 
\begin{align*}
\begin{aligned}
\|e_{n,j}^{2,4}\|
&
=
\|(E(s-t_{j-1})-I) A^{-1+\delta} A^{2-\delta}E(t_n-s)f(X(t_j))\|
\\ &
\leq C
(s-t_{j-1})^{\frac12(1-\delta)}(t_n-s)^{-1+\frac{\delta}{2}}\|f(X(t_j))\|
\\ &
\leq C\ts^{\frac12(1-\delta)}(t_n-s)^{-1+\frac{\delta}{2}}.
\end{aligned}
\end{align*}
Using also \eqref{eq:plip}, for $s\in [t_{j-1},t_j]$, we have 
\begin{align*}
\begin{aligned}
\|e_{n,j}^{2,5}\| 
&
\leq C(t_n-s)^{-3/4}
\big(1+|X(t_j)|_1^2+|X(s)|^2_{1}\big)\| X(t_j)-X(s)\|
\\ & 
\leq 
C(t_n-s)^{-3/4}(t_j-s)^{\frac12(1-\delta)}
\leq 
C(t_n-s)^{-3/4}\ts^{\frac12(1-\delta)}. 
\end{aligned}
\end{align*}
Finally, by \eqref{eq:path3}, $\|e_n^3\|\leq C(h^2+\ts^{1/2})$.
Collecting all the above terms and applying a generalized version of
Gronwall's lemma, \cite[Lemma 7.1]{Stig}, finishes the proof if $k$ is
small enough.
\end{proof}

\begin{thm}\label{thm:main}
Under the hypothesis of Theorem \ref{thm:pconv} with $\beta =1$, we
have 
\begin{align*}
\lim_{h,k\to 0}\EE\sup_{0\le n\le N}\|X(t_n)-\Xbe^n\|^2=0.  
\end{align*}
\end{thm}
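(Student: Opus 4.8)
The plan is to combine the pathwise error bound of Theorem~\ref{thm:pconv} on a set of large probability with uniform moment bounds on the complementary small set. Writing $e_n=X(t_n)-\Xbe^n$, I would fix $0<\epsilon,\delta<1$ and split
\begin{align*}
\EE\sup_{0\le n\le N}\|e_n\|^2
=\EE\Big[\mathbf{1}_{\Omega^\epsilon_{h,k}}\sup_{0\le n\le N}\|e_n\|^2\Big]
+\EE\Big[\mathbf{1}_{\Omega\setminus\Omega^\epsilon_{h,k}}\sup_{0\le n\le N}\|e_n\|^2\Big],
\end{align*}
where $\Omega^\epsilon_{h,k}$ is the set from Theorem~\ref{thm:pconv} with $\mathbf{P}(\Omega\setminus\Omega^\epsilon_{h,k})<\epsilon$. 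On $\Omega^\epsilon_{h,k}$ the pathwise bound (with $\beta=1$) controls the first term by a purely deterministic quantity, while the second term I would estimate by the Cauchy--Schwarz inequality,
\begin{align*}
\EE\Big[\mathbf{1}_{\Omega\setminus\Omega^\epsilon_{h,k}}\sup_{0\le n\le N}\|e_n\|^2\Big]
\le\Big(\EE\sup_{0\le n\le N}\|e_n\|^4\Big)^{1/2}\epsilon^{1/2}.
\end{align*}

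The auxiliary ingredient this requires is a fourth moment bound $\EE\sup_{0\le n\le N}\|e_n\|^4\le M$ with $M$ independent of $h$ and $k$. Since $\|e_n\|^4\le C(\|X(t_n)\|^4+\|\Xbe^n\|^4)$ and $\|v\|^4\le|\cD|\,\|v\|_{L_4}^4$ on the bounded domain, I would obtain this from \eqref{eq:esup} for the exact solution and from Theorem~\ref{thm:H1moment} with $p=1$ for the numerical solution: because \eqref{eq:F} gives $\cF(u)\ge c\|u\|_{L_4}^4-C$ together with $\cF(u)\le J(u)$, the bound on $\EE\sup_j J(\Xbej)$ transfers to $\EE\sup_j\|\Xbej\|_{L_4}^4$, the case $j=0$ being trivial since $\|X_h^0\|\le\|X_0\|$. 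This fixes $M$ once and for all.

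The step I expect to be the genuine obstacle is the order of the limits, since the constant $C(T,\epsilon,\delta)$ in Theorem~\ref{thm:pconv} degenerates as $\epsilon\to0$ and one cannot let $h,k\to0$ uniformly in $\epsilon$. The correct sequencing is: given $\eta>0$, first choose $\epsilon$ so small that $M^{1/2}\epsilon^{1/2}<\eta/2$ and fix, say, $\delta=\tfrac12$; with $\epsilon$ and $\delta$ now frozen, the pathwise bound on $\Omega^\epsilon_{h,k}$ reads
\begin{align*}
\sup_{0\le n\le N}\|e_n\|^2
\le C(T,\epsilon,\tfrac12)\Big((h+k^{1/4})^2|X_0|_1^2+h^{4(1-\delta)}+k^{1-\delta}\Big),
\end{align*}
whose right-hand side tends to $0$ as $h,k\to0$; choosing $h,k$ small enough to make it $<\eta/2$ then gives $\EE\sup_n\|e_n\|^2<\eta$. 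As $\eta$ is arbitrary, the limit is $0$. No new estimates beyond Theorems~\ref{thm:pconv} and~\ref{thm:H1moment} and \eqref{eq:esup} are needed; the whole difficulty is packaging the probabilistic and deterministic pieces in this order.
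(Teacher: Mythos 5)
Your proposal is correct and follows essentially the same route as the paper's proof: split the expectation over $\Omega^\epsilon_{h,k}$ and its complement, apply the pathwise bound of Theorem~\ref{thm:pconv} (with $\beta=1$, $\delta=\tfrac12$) on the good set, use Cauchy--Schwarz together with the uniform $L_4$-moment bounds from \eqref{eq:esup} and Theorem~\ref{thm:H1moment} (via $c\|u\|_{L_4}^4-C\le\cF(u)\le J(u)$) on the bad set, and choose $\epsilon$ before letting $h,k\to0$. The only cosmetic difference is that the paper bounds $\|e_n\|^2\le 2(\|X(t_n)\|^2+\|\Xbe^n\|^2)$ before applying Cauchy--Schwarz rather than invoking a fourth moment of $e_n$ itself, which is an equivalent packaging of the same estimate.
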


\begin{proof}
It follows from Theorem~\ref{thm:existenceandregularityofX} and
Theorem~\ref{thm:H1moment} that there is $K>0$ such that
\begin{align*}
  \EE\sup_{0\le n\le N} 
  \left(\|X(t_n)\|_{L_4}^4+\|\Xbe^n\|_{L_4}^4\right)\le K. 
\end{align*}
Let $\epsilon>0$, $0<h,k<1$ small enough, and let
$C_\epsilon=C(T,\epsilon,\delta)$ and $\Omega^{\epsilon}_{h,k}$ as in
Theorem~\ref{thm:pconv}. Then, by using Theorem~\ref{thm:pconv} with
$\beta=1$ and $\delta=\frac12$, we get
\begin{equation*}
\begin{aligned}
&\EE\sup_{0\le n\le N}\|X(t_n)-\Xbe^n\|^2
\leq \int_{\Omega^{\epsilon}_{h,k}}\sup_{0\le n\le N}\|X(t_n)-\Xbe^n\|^2\,\dd \mathbf{P}\\
&\qquad\quad 
+2\int_{(\Omega^{\epsilon}_{h,k})^c}\sup_{0\le n\le N}
\left(\|X(t_n)\|^2+\|\Xbe^n\|^2\right)\,\dd \mathbf{P}\\
&\qquad
\leq C_\epsilon(h^{2}+k^{1/2})
+4 \epsilon^{1/2}
\left(\int_{(\Omega^{\epsilon}_{h,k})^c}\sup_{0\le n\le N}
\left(\|X(t_n)\|^4+\|\Xbe^n\|^4\right)
\,\dd \mathbf{P}\right)^{1/2}\\
&\qquad
\leq C_\epsilon(h^{2}+k^{1/2})
+4 \epsilon^{1/2}
\left(\EE\sup_{0\le n\le N}\left(\|X(t_n)\|^4+\|\Xbe^n\|^4\right)\right)^{1/2}\\
&\qquad
\leq C_\epsilon(h^{2}+k^{1/2})
+4 \epsilon^{1/2}|\mathcal{D}|^{1/2}
\left(\EE\sup_{0\le n\le N}\left(\|X(t_n)\|_{L_4}^4+\|\Xbe^n\|_{L_4}^4\right)\right)^{1/2}
\\ &\qquad
\leq C_\epsilon(h^{2}+k^{1/2})+4 \epsilon^{1/2}|\mathcal{D}|^{1/2}K.
\end{aligned}
\end{equation*}
Let $\eta>0$. Choose $0<\epsilon<1$ such that $8
\epsilon^{1/2}|\mathcal{D}|^{1/2}K<\frac{\eta}{2}$. 
Therefore, if $\max(h,k)<\left(\frac{\eta}{4C_\epsilon}\right)^{2}$, then
\begin{align*}
\EE\sup_{0\le n\le N}\|X(t_n)-\Xbe^n\|^2< \eta,  
\end{align*}
and the proof is complete.
\end{proof}

\subsection*{Acknowledgement}  We thank the anonymous referees for
constructive criticism that helped to clarify the presentation of the
paper.


\def\cprime{$'$}

\end{document}